\renewcommand{\[}{\begin{equation}\begin{aligned}}
\renewcommand{\]}{\end{aligned} \end{equation}}
\newcommand{\ddbar}{\sqrt{-1}\partial \bar\partial }
\newtheorem{thm}{Theorem}
\newtheorem{prop}[thm]{Proposition}
\newtheorem{conj}[thm]{Conjecture}
\theoremstyle{remark}
\theoremstyle{definition}
\author{G\'abor Sz\'ekelyhidi}
\address{Department of Mathematics, Northwestern University, Evanston,
  IL, USA}
\email{gaborsz@northwestern.edu}
\title{Gromov-Hausdorff limits of collapsing Calabi-Yau fibrations}
\date{}
\begin{document}

\begin{abstract}
  We study Calabi-Yau metrics on a projective manifold in K\"ahler
  classes converging to a semiample class given by a fibration. We show that the
  Gromov-Hausdorff limit of the metrics is homeomorphic to the base of
  the fibration and in addition the discriminant locus has Hausdorff
  codimension at least 2. This resolves conjectures of Tosatti. 
\end{abstract}

\maketitle

\section{Introduction}
Let $M$ be a compact K\"ahler manifold with trivial first Chern
class. By Yau's solution of the Calabi conjecture~\cite{Yau78}, every
K\"ahler class on $M$ admits a unique Ricci flat
K\"ahler metric. A natural and important problem, see
e.g. Yau~\cite[Problem 88]{YauProblems}, is to understand how a family 
$\omega_t$ of such metrics can degenerate as the corresponding 
K\"ahler classes, or even the underlying complex manifold varies. See for
example Tosatti~\cite{Tos20} for a recent survey on the topic.

In this paper we are motivated by the setting where the family $[\omega_t]$
converges to a semiample class on $M$. More precisely we assume that $M$ is a
projective manifold with a semiample line bundle $L$, inducing a
holomorphic map $f: M\to X$. We assume that the
cohomology classes of the Ricci flat metrics $\omega_t$ are given by $[\omega_t] =
c_1(L) + tc_1(A)$, where $A$ is an ample line bundle on $M$, and $t\to
0$. It was shown by Tosatti~\cite{Tos09} and Zhang~\cite{Zhang06}
that the diameters of $(M, \omega_t)$ are uniformly bounded as $t\to
0$, and so we can extract compact Gromov-Hausdorff limits along
sequences $t_k\to 0$. A long standing problem is to identify these
limits, the most interesting case being when $\dim X < \dim M$. 

Under the assumptions $f$ is a holomorphic submersion $f: M^\circ \to
X^\circ$ over a Zariski open subset $X^\circ\subset X$.  Moreover,
$X^\circ$ admits a certain canonical K\"ahler metric
$\omega_{can}$ constructed by Song-Tian~\cite{ST12} (see
Section~\ref{sec:collapsing} for more details). 
It was shown by Hein-Tosatti~\cite{HT25} that as $t\to 0$, the forms
$\omega_t$ converge to $\omega_{can}$ in $C^\infty_{loc}(M^\circ)$
after many partial results, see e.g. \cite{GW00, Tos10, TWY18, HT20}.

As for the global
behavior of the metrics,  Song-Tian-Zhang~\cite{STZ19} showed
that the Riemannian manifolds $(M, \omega_t)$ converge to
a compact metric space $(Z, d_Z)$ in the Gromov-Hausdorff
topology, and moreover $(Z,d_Z)$ is the metric completion of the
length metric space $(X^\circ, \omega_{can})$. A basic question that has remained open is
to identify the geometry of this metric completion. The following
conjecture was formulated by Tosatti~\cite{Tos11} (see also
\cite{GTZ13,Tos20}).
\begin{conj}
  The Gromov-Hausdorff limit $(Z, d_Z)$ is homeomorphic to $X$, and
  the set $X\setminus X^\circ$ has Hausdorff codimension at least two
  in $(Z,d_Z)$. 
\end{conj}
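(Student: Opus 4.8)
The plan is to leverage the Song--Tian--Zhang identification~\cite{STZ19} of $(Z,d_Z)$ with the metric completion of $(X^\circ,\omega_{can})$, and thereby reduce both assertions to quantitative estimates comparing the length metric $d_{can}$ of $\omega_{can}$ on $X^\circ$ with the length metric $d_X$ of a fixed K\"ahler form $\omega_X$ on $X$ chosen so that $f^\ast[\omega_X]=c_1(L)$. Since $X\setminus X^\circ$ is a proper analytic subset, hence of real codimension $\ge 2$ in $X$, deleting it does not change the length metric, so the completion of $(X^\circ,d_X)$ is just $(X,d_X)$ with its manifold topology. Hence it suffices to prove that the identity map on $X^\circ$ is uniformly continuous in both directions between $d_{can}$ and $d_X$, with a modulus of continuity that is at worst ``logarithmic'' (so that it affects neither the topology of the completion nor Hausdorff dimension): this yields at once that $Z\cong X$ and that $Z\setminus X^\circ$ has Hausdorff dimension at most $2\dim_{\mathbb C}X-2$. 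Note that already the existence of a continuous map $F\colon Z\to X$ --- the extension of the inclusion $X^\circ\hookrightarrow X$ --- is part of this.

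The heart of the matter is then a uniform control of $\omega_{can}$ in a neighborhood of $X\setminus X^\circ$. Writing $\omega_{can}=\omega_X+\ddbar\phi_{can}$, the potential $\phi_{can}$ solves a complex Monge--Amp\`ere equation on $X$ (Song--Tian~\cite{ST12}) whose right-hand side is, up to a smooth positive factor, the fibrewise Calabi--Yau volume $\int_{X_y}\Omega_y\wedge\overline{\Omega_y}$. What is needed is: (i) a uniform $L^\infty$ and modulus-of-continuity estimate for $\phi_{can}$ across $X\setminus X^\circ$, of pluripotential (Ko{\l}odziej) type, controlling the right-hand side quantitatively in terms of the distance to $X\setminus X^\circ$; and (ii) the resulting bound $\omega_{can}\le C\,\omega_X$ in the directions \emph{tangent} to $X\setminus X^\circ$, with at worst a mild (integrable) blow-up in the normal direction. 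The behaviour of the fibrewise volume near the discriminant is governed by the structure theory of one-parameter degenerations of Calabi--Yau manifolds (semistable reduction, nilpotent orbits, limiting Hodge structures) via comparison with explicit local models; alternatively one can obtain the same control by propagating estimates from the Ricci-flat metrics $\omega_t$ on $M$, using the Monge--Amp\`ere equations they satisfy together with maximum-principle (Chern--Lu) arguments to make the Hein--Tosatti convergence $\omega_t\to\omega_{can}$ uniform up to the singular fibres --- concretely, showing $\sup_{y\in X}\mathrm{diam}_{\omega_t}(f^{-1}(y))\to 0$ and that $\mathrm{diam}_{\omega_t}\big(f^{-1}(B^X_r(y))\big)$ is small once $t$ is small and then $r$ is small.

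Granting these estimates, the conclusion assembles as follows. The upper bound on $\omega_{can}$ shows, after routing paths to avoid the codimension-$\ge 2$ set $X\setminus X^\circ$ (along which the permitted normal blow-up is integrable), that the identity $X^\circ\to X^\circ$ is uniformly continuous from $d_X$ to $d_{can}$; and the tangential non-degeneracy of $\omega_{can}$ --- the statement that $\omega_{can}$ cannot become negligible over a region of definite $d_X$-extent, which would otherwise obstruct the very existence of $F$ --- gives uniform continuity from $d_{can}$ to $d_X$. Hence the identity extends to a homeomorphism $F\colon Z\to X$ restricting to the identity on $X^\circ$. For the codimension statement, the inverse extension is, near $X\setminus X^\circ$, Lipschitz in the directions along $X\setminus X^\circ$ (and merely H\"older across it, which is irrelevant there), so it maps the real codimension-$2$ analytic set $X\setminus X^\circ\subset(X,d_X)$ to a subset of $(Z,d_Z)$ of Hausdorff dimension at most $2\dim_{\mathbb C}X-2=\dim_{\mathcal H}(Z,d_Z)-2$.

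The principal obstacle is precisely the analysis near the discriminant: away from $X\setminus X^\circ$ the argument rests on classical ingredients (uniform fibrewise collapse and the Hein--Tosatti convergence), but there the complex structure of the Calabi--Yau fibres degenerates, $\omega_{can}$ ceases to be smooth, and the a priori estimates --- both the pluripotential estimates for the Monge--Amp\`ere potentials and the fibrewise-collapse estimates --- must be made quantitative, and uniform as $t\to 0$, in the distance to the singular locus. I expect this local analysis, combining pluripotential theory, the structure theory of degenerations of Calabi--Yau manifolds, and maximum-principle estimates exploiting $\mathrm{Ric}(\omega_t)=0$, to be the crux of the proof.
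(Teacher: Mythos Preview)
Your strategy is fundamentally different from the paper's, and it contains a genuine gap at exactly the point you yourself flag as ``the crux.''

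You propose to obtain both directions of the homeomorphism from a two-sided metric comparison between $d_{can}$ and $d_X$ near the discriminant: a lower bound $\omega_{can}\ge c\,\omega_X$ for one direction, and an upper bound (tangential Lipschitz, normally integrable blow-up) for the other. The lower bound is indeed available --- it is the strict positivity result of~\cite{AIM} --- and yields a Lipschitz surjection $\Phi\colon\hat X\to X$. The gap is the upper bound. An estimate of the form $\omega_{can}\le C\omega_X$ along the discriminant, or any quantitative variant strong enough to force continuity of the inverse $X\to\hat X$, is \emph{not} known in the generality of the conjecture; such estimates are precisely what underlie the previously resolved special cases (one-dimensional base~\cite{GTZ16}, smooth base with SNC discriminant~\cite{LT23}, hyperk\"ahler $M$~\cite{TZ20}). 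The tools you invoke --- semistable reduction, nilpotent orbits, limiting Hodge structures, or uniform Chern--Lu estimates propagated from $\omega_t$ --- are exactly the ingredients of those special cases, and there is no mechanism in sight to make them work for arbitrary normal $X$ (not even $\mathbb Q$-Gorenstein in general) with arbitrary discriminant. Your proposal thus reduces the conjecture to an estimate that is at least as hard as the conjecture itself.

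The paper circumvents any upper bound on $\omega_{can}$ entirely. Rather than showing the inverse map $X\to\hat X$ is continuous, it shows directly that the Lipschitz map $\Phi\colon\hat X\to X$ is \emph{injective}, by constructing, via H\"ormander $L^2$-estimates in the Donaldson--Sun style~\cite{DS14}, holomorphic functions on local Stein neighborhoods that separate any two preimages of a point. The key input making this work near the discriminant is not a metric estimate but the $RCD(0,2n)$ structure on $(\hat X,\omega_{can}^n)$, obtained by identifying the renormalized limit measure of the collapsing $(M,\omega_t)$ with $\omega_{can}^n$; this, together with the smooth approximations of Proposition~\ref{prop:coords}, lets one import the holomorphic-chart machinery of~\cite{LiuSz1} even at points of $D$ that are metrically regular. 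The Hausdorff dimension bound likewise does not come from a metric comparison: for $X\setminus X^{reg}$ it uses the De Philippis--Gigli stratification for RCD spaces, and for the (possibly metrically regular) part of $D$ it exploits that $D$ is locally cut out by a holomorphic function of controlled vanishing order.
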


The conjecture was shown in several special cases, such as when $X$
has dimension one~\cite{GTZ16}, when
$X$ is smooth and the divisor part of $X\setminus X^\circ$ has simple
normal crossings~\cite{LT23}, when $M$ is hyperk\"ahler~\cite{TZ20},
or when $\dim M = \dim X$, see \cite{Song14}.
The homeomorphism between $Z$ and $X$ was
shown by Song-Tian-Zhang~\cite{STZ19} when $X$ has only orbifold
singularities. Our main result is that the conjecture holds in
general. 

\begin{thm}\label{thm:GHlimit1}
  The Gromov-Hausdorff limit $(Z,d_Z)$ is homeomorphic to $X$. In
  addition we have the Hausdorff dimension bounds $\dim_{\mathcal{H}}
  (X\setminus X^{reg}) \leq 2n-4$ and $\dim_{\mathcal{H}} (X\setminus
  X^{\circ}) \leq 2n-2$ in terms of the metric $d_Z$, where $n =
  \dim_{\mathbb{C}}X$. Here $X^{reg}$ is
  the complex analytically regular subset of $X$. 
\end{thm}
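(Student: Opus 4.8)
The plan is to deduce both the homeomorphism and the two Hausdorff dimension bounds from a single geometric estimate controlling the collapsing Calabi--Yau metrics $\omega_t$ over small balls in $X$ centred on the discriminant $X\setminus X^\circ$. Fix a smooth K\"ahler metric $\omega_X$ on $X$. A standard second order estimate for the complex Monge--Amp\`ere equation solved by $\omega_t$, together with the facts that $[\omega_t]\to c_1(L)$ and that $L$ is the pullback of an ample class from $X$, gives $\omega_t\geq c\, f^*\omega_X$ on $M$ for a fixed $c>0$ and all small $t$. Hence $f\colon (M,\omega_t)\to(X,c^{-1}\omega_X)$ is distance non-increasing uniformly in $t$, and in the Gromov--Hausdorff limit $f$ converges to a $1$-Lipschitz surjection $\Phi\colon(Z,d_Z)\to(X,c^{-1}d_{\omega_X})$; since $(Z,d_Z)$ is the completion of $(X^\circ,\omega_{can})$, this just records that the identity of $X^\circ$ is Lipschitz from $d_{\omega_{can}}$ to $d_{\omega_X}$ and therefore extends to $\Phi$. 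A continuous bijection between compact Hausdorff spaces is a homeomorphism, so the first assertion of the theorem reduces to the injectivity of $\Phi$.

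Injectivity over $X^\circ$ is straightforward: by the $C^\infty_{loc}(M^\circ)$ convergence $\omega_t\to f^*\omega_{can}$ of Hein--Tosatti and the vanishing of $f^*\omega_{can}$ on the fibres, for every compact $K\ssubset X^\circ$ the fibres $M_x=f^{-1}(x)$, $x\in K$, have $\omega_t$-diameter tending to $0$ uniformly, so each such fibre collapses to a single point. Reading off the local model $f^{-1}(U)\approx U\times(\text{flat collapsing fibre})$ over $U\ssubset X^\circ$, one gets that $\Phi$ restricts to a homeomorphism of the open set $\Phi^{-1}(X^\circ)$ onto $X^\circ$. It remains to treat $\Phi$ over $X\setminus X^\circ$.

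The heart of the argument --- and the step I expect to be the main obstacle --- is the collar estimate
\[
\lim_{r\to 0}\ \sup_{x\in X\setminus X^\circ}\ \limsup_{t\to 0}\ \mathrm{diam}_{\omega_t}\!\left(f^{-1}(\overline{B_r(x)})\right)=0,
\]
together with its Lipschitz refinement $\mathrm{diam}_{\omega_t}(f^{-1}(\overline{B_r(x)}))\leq C r$ for $x$ in a fixed neighbourhood of $X\setminus X^\circ$, the balls taken with respect to $\omega_X$. Granting the first form, every $\Phi^{-1}(x)$ is a single point, so $\Phi$ is injective, hence a homeomorphism. The difficulty is that the smooth convergence of the previous step degenerates along $X\setminus X^\circ$, so one must control $\omega_t$ directly. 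I would work upstairs: writing $\omega_t=\omega_{0,t}+\ddbar\varphi_t$ with $\omega_{0,t}$ a fixed smooth reference form in $c_1(L)+tc_1(A)$, the potentials $\varphi_t$ satisfy uniform bounds from pluripotential theory (Ko\l odziej and Eyssidieux--Guedj--Zeriahi type estimates, since the Calabi--Yau volume form of $M$ pushes forward to a measure on $X$ with $L^p$ density for some $p>1$), which in turn control the Monge--Amp\`ere capacity of $f^{-1}(B_r(x))$; one then rules out the only mechanism that could make $\mathrm{diam}_{\omega_t}(f^{-1}(B_r(x)))$ large --- a long thin neck running essentially along the fibre $M_x$ --- by combining the distance non-increasing property of $f$, the Ricci-flatness of $\omega_t$ with the volume comparison $\mathrm{Vol}_{\omega_t}(f^{-1}(B_r(x)))\leq C r^{2n}\,\mathrm{Vol}_{\omega_t}(M)$ (up to a logarithmic factor), and a Cheeger--Colding type segment estimate. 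This is the precise sense in which the metric completion neither pinches nor grows extra boundary along the discriminant, and it is where essentially all of the work lies.

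Finally, the dimension bounds follow from the Lipschitz collar estimate by a covering argument. Since $X$ is normal, $X\setminus X^{reg}=\mathrm{Sing}(X)$ has complex codimension at least two and $X\setminus X^\circ\supseteq\mathrm{Sing}(X)$ is a proper analytic subset of complex codimension at least one; hence $X\setminus X^\circ$ is covered by $N(r)\lesssim r^{-(2n-2)}$ balls of $\omega_X$-radius $r$ centred on it, and $X\setminus X^{reg}$ by $\lesssim r^{-(2n-4)}$ such balls. By the collar estimate each such ball has $\Phi$-preimage of $d_Z$-diameter $\leq C r$, so, identifying $Z$ with $X$ via $\Phi$, the sets $X\setminus X^\circ$ and $X\setminus X^{reg}$ are covered by $\lesssim r^{-(2n-2)}$, respectively $\lesssim r^{-(2n-4)}$, sets of $d_Z$-diameter $\lesssim r$. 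Letting $r\to 0$ gives $\dim_{\mathcal H}(X\setminus X^\circ)\leq 2n-2$ and $\dim_{\mathcal H}(X\setminus X^{reg})\leq 2n-4$ in the metric $d_Z$, completing the proof.
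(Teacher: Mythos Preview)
Your approach has a genuine gap: the ``Lipschitz refinement'' of the collar estimate, $\mathrm{diam}_{\omega_t}(f^{-1}(\overline{B_r(x)}))\leq Cr$, is false in general. It would force the limit metric $d_Z$ to be bi-Lipschitz to a smooth reference metric $d_{\omega_X}$ (since you already have $d_{\omega_X}\leq C' d_Z$ from strict positivity), but $\omega_{can}$ typically has conical singularities along the discriminant --- already for elliptically fibred K3 surfaces, the cone angle over a generic discriminant point depends on the Kodaira fibre type and is usually strictly less than $2\pi$. Near such a point $\omega_{can}\sim |z|^{2\beta-2}\,|dz|^2$ with $\beta<1$, so an $\omega_X$-ball of radius $r$ has $d_Z$-diameter $\sim r^\beta\gg r$. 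Your covering argument then only gives $\dim_{\mathcal H}(X\setminus X^\circ)\leq(2n-2)/\beta$ and $\dim_{\mathcal H}(X\setminus X^{reg})\leq(2n-4)/\beta$, which says nothing.

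The paper avoids any two-sided metric comparison. For the homeomorphism (equivalently, your weak collar estimate) it works on $\hat X$ as an $RCD(0,2n)$ space with measure $\omega_{can}^n$ and separates points by constructing holomorphic peak sections via H\"ormander $L^2$-estimates, in the Donaldson--Sun style; the main new input is a local smooth K\"ahler approximation with Ricci bounded below near metrically almost-regular points of the divisor, which lets one import the Liu--Sz\'ekelyhidi holomorphic chart construction. For the Hausdorff dimensions it argues intrinsically on $\hat X$: the bound on $X\setminus X^{reg}$ comes from showing tangent cones there cannot split an $\mathbb{R}^{2n-3}$ factor and then invoking De~Philippis--Gigli stratification, while the bound on $X\setminus X^\circ$ exploits that $D$ is locally the zero set of a holomorphic function with controlled vanishing order, passing this to the Euclidean tangent cone at a would-be density point. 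Your sketch for the weak collar estimate (capacity bounds plus segment inequality) is not obviously wrong, but it is precisely the step that had resisted direct attack; the paper sidesteps it entirely.
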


Theorem~\ref{thm:GHlimit1} will follow from a more general statement that we now
describe. Suppose that $(X,\omega_X)$ is an $n$-dimensional compact normal K\"ahler
space, with smooth K\"ahler metric $\omega_X$, and that $\omega =
\omega_X + \ddbar u$ is a (singular) K\"ahler metric on $X$ satisfying
the following conditions:
\begin{enumerate}
\item $u$ is smooth on $X^\circ = X^{reg}\setminus D$ for a divisor $D$,
\item $\omega^n = e^F \omega_X^n$, such that $F\in L^1(\omega_X^n)$ and
  $e^F\in L^p(\omega_X^n)$ for some $p > 1$,
\item $\mathrm{Ric}(\omega) > -A(\omega + \omega_X)$, for some $A >
  0$ on $X^{reg}$. 
\end{enumerate}
Note that beyond normality we are not making assumptions on the
singularities of $X$. This is important since in general $X$ in
Theorem~\ref{thm:GHlimit1} may not be $\mathbb{Q}$-Gorenstein for
instance. We define $\hat{X}$ to be the metric completion of the
metric space $(X^\circ, \omega)$. We will show the following.
\begin{thm}\label{thm:homeo1}
  Suppose that the singular K\"ahler space $(X, \omega)$ satisfies
  the conditions (1),(2),(3) above, and in addition the metric
  completion $\hat{X}$, equipped with the volume measure $\omega^n$,
  is an $RCD(K,2n)$-space for some $K\in\mathbb{R}$, in the sense of
  \cite{AGS14,EKS15}. 
  Then $\hat{X}$ is
  homeomorphic to $X$, and 
  we have the dimension bounds $\dim_{\mathcal{H}} (X\setminus
  X^{reg}) \leq 2n-4$, $\dim_{\mathcal{H}} (X\setminus X^\circ) \leq
  2n-2$ as in Theorem~\ref{thm:GHlimit1}. 
\end{thm}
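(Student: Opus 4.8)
The plan is to exploit the structure theory of $RCD(K,2n)$-spaces to show that $\hat X$ is, away from a set of Hausdorff codimension at least $4$, a smooth Riemannian manifold that coincides with $X^{\mathrm{reg}}\setminus D$, and then to identify the completion with $X$ globally. First I would record the local picture: on $X^\circ$ condition (1) says $\omega$ is a genuine smooth Kähler metric, conditions (2) and (3) give an $L^p$ volume bound and a Ricci lower bound, so by Hein--Tosatti-type local estimates (or the interior regularity already available from the complex Monge--Ampère equation) $(X^\circ,\omega)$ is a smooth incomplete Riemannian manifold whose metric completion is $\hat X$. The first key step is to invoke the almost-regularity of $RCD(K,N)$-spaces: by the work on rectifiability and the structure theorem (Mondino--Naber, plus the stratification results for Ricci-limit/$RCD$ spaces), $\hat X$ has a well-defined essential dimension, which must equal $2n$ because $X^\circ$ is open and dense with its smooth $2n$-dimensional metric, and the singular set $\mathcal S = \hat X\setminus \mathcal R$ (complement of the top-dimensional regular set) has Hausdorff dimension at most $2n-2$; moreover each stratum $\mathcal S^k$ of points whose tangent cones split off at most $\mathbb R^k$ has $\dim_{\mathcal H}\mathcal S^k\le k$.

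The second step is to improve the codimension-$1$ bound to codimension $2$ on the non-$D$ part: I would show that no point of $\hat X$ has a tangent cone of the form $\mathbb R^{2n-1}\times(\text{half-line})$ or $\mathbb R^{2n-1}\times(\text{cone of angle}<\pi)$, i.e. the codimension-$1$ stratum is empty, because such a tangent cone is incompatible with the complex/Kähler structure present on the smooth part together with the two-sided Ricci bound — this is the standard ``no codimension-one singularities'' phenomenon for non-collapsed Ricci-limit and $RCD$ spaces (Cheeger--Colding--Tian, and in the $RCD$ setting Kapovitch--Mondino, De Philippis--Gigli). Hence $\dim_{\mathcal H}\mathcal S\le 2n-2$, and on the complement of $D$ we in fact want $\le 2n-4$; here I would argue that along $X^{\mathrm{reg}}\setminus D$ the space is already smooth, so the only possible singular points of $\hat X$ lying over $X^{\mathrm{reg}}\setminus D$ would have to come from incompleteness that ``fills in'' — but condition (3) plus properness of $f$-type arguments force the metric completion over $X^{\mathrm{reg}}\setminus D$ to add nothing, or to add only points over $X\setminus X^{\mathrm{reg}}$; combined with the fact that the complex-codimension-$1$ part of $X\setminus X^\circ$ is exactly $D$, the set $X\setminus X^{\mathrm{reg}}$ has complex codimension $\ge 2$, hence real Hausdorff codimension $\ge 4$ in $\hat X$ once we know the identification of topologies.

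The third and central step is the global homeomorphism $\hat X\cong X$. I would construct a map $\Phi\colon \hat X\to X$ by continuity: on $X^\circ$ it is the identity, and I must show the identity map $(X^\circ,\omega)\to (X^\circ,\omega_X)$ extends continuously to a bijection of completions. Surjectivity onto $X$ follows because $(X,\omega_X)$ is compact and $X^\circ$ is dense; continuity of the extension and the fact that preimages of points of $X\setminus X^\circ$ are points (rather than positive-dimensional sets) is where the $RCD$ hypothesis does the essential work: a fiber of $\Phi$ over $x\in X\setminus X^\circ$, being a connected compact subset of the $RCD(K,2n)$ space $\hat X$, would, if it had positive diameter, produce a bad tangent cone or violate the Bishop--Gromov volume monotonicity (the collapse of an entire positive-dimensional set to a point is impossible in a non-collapsed $RCD$ space of the top dimension); so each fiber is a single point, $\Phi$ is a continuous bijection between compact Hausdorff spaces, hence a homeomorphism. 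Finally I would transport the Hausdorff-dimension bounds through $\Phi$: since $\Phi$ is a homeomorphism, $X\setminus X^{\mathrm{reg}}$ and $X\setminus X^\circ$ correspond to closed subsets of $\hat X$, and the bounds $\dim_{\mathcal H}\le 2n-4$ and $\le 2n-2$ follow from Steps~1--2 together with the observation that $X\setminus X^{\mathrm{reg}}$ has complex codimension $\ge 2$ while $D=X^{\mathrm{reg}}\setminus X^\circ$ has complex codimension $1$.

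I expect the main obstacle to be Step~3's claim that the fibers of $\Phi$ are points — equivalently, that $\hat X$ does not ``over-collapse'' any subvariety of $X$. Proving this requires carefully combining the non-collapsing consequence of the $RCD(K,2n)$ assumption (ruling out lower-dimensional tangent cones and forcing a uniform lower Bishop bound on volume ratios) with the fact that the metric $\omega$ is, near $X\setminus X^\circ$, controlled from below in a suitable weak sense by $\omega_X$ so that distinct points of $X$ cannot be identified; one must also handle the possibility that $\hat X$ contains ``extra'' points not mapping into $X$ at all, which is excluded by the volume bound $\omega^n = e^F\omega_X^n$ with $e^F\in L^p$ keeping the total measure of $\hat X$ equal to that of $X$. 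Making the passage from the local analytic estimates (1)--(3) to the genuinely metric statement about $\hat X$ rigorous — in particular that the abstract completion $\hat X$ really is the Gromov--Hausdorff-type limit to which structure theory applies — is the technical heart of the argument.
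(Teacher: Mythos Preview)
Your proposal has genuine gaps at precisely the points the paper identifies as the main difficulties.

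\textbf{Injectivity of $\Phi$.} Your Step~3 argument --- that a positive-diameter fiber would ``produce a bad tangent cone or violate Bishop--Gromov'' --- is not a proof. Non-collapsed $RCD$ structure does not by itself preclude a continuous map $\hat X\to X$ from collapsing a set; Bishop--Gromov controls volumes of metric balls, not fibers of an auxiliary map. The paper's approach is entirely different and much harder: using the strict lower bound $\omega>c_0\omega_X$ from \cite{AIM} to get $\Phi$ Lipschitz, and then proving injectivity by constructing, via H\"ormander $L^2$ estimates on Stein neighborhoods, holomorphic functions that separate any two points $p_1\neq p_2$ with $\Phi(p_1)=\Phi(p_2)$. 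This is the Donaldson--Sun technique, and the paper's crucial new ingredient (Proposition~\ref{prop:coords}) is that near metrically almost-regular points one can approximate $\omega$ in the Gromov--Hausdorff sense by \emph{smooth} K\"ahler metrics with Ricci bounded below, so that the holomorphic chart construction of \cite{LiuSz1} applies. You do not address this, and in fact the difficulty you do not see is that points of $D$ can be metrically regular (tangent cone $\mathbb R^{2n}$) while $\omega$ is singular there, so rescalings of $\omega$ do not converge smoothly on the regular set of tangent cones.

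\textbf{Hausdorff dimension bounds.} You repeatedly invoke that $X\setminus X^{\mathrm{reg}}$ has complex codimension $\ge 2$ and $D$ has complex codimension $1$, but those are statements about the reference metric $\omega_X$, not about the metric $d_{\hat X}$ in which the Hausdorff dimension is to be estimated. The paper explicitly flags this: $D\cap\mathcal R$ could a priori have large $d_{\hat X}$-Hausdorff dimension, and the bound $\dim_{\mathcal H}(X\setminus X^\circ)\le 2n-2$ is obtained by exploiting that $D$ is locally cut out by a holomorphic function of finite vanishing order, passing to a density point, and showing the limit of rescaled defining functions is a nonzero holomorphic function on $\mathbb C^n$ whose zero set has dimension $\le 2n-2$. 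Similarly, the bound $\dim_{\mathcal H}(X\setminus X^{\mathrm{reg}})\le 2n-4$ is proved not by citing complex codimension but by showing that if a tangent cone at $p$ splits $\mathbb R^{2n-3}$ then it is a \emph{smooth} affine variety (via \cite{DS17,LiuSz2} and a holomorphy argument for the translation isometries), whence $p\in X^{\mathrm{reg}}$; only then does the De~Philippis--Gigli stratification give the estimate.
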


It is natural to conjecture that the RCD assumption in
Theorem~\ref{thm:homeo1} is actually superfluous. This was conjectured
previously in \cite{SzRCD} for singular K\"ahler-Einstein metrics,
where it
was also shown under the assumption that the K\"ahler-Einstein metric can
be approximated by constant scalar curvature K\"ahler metrics on a
resolution. See also Guo-Song~\cite{GS25}, Fu-Guo-Song~\cite{FGS25}
and Chen-Chiu-Hallgren-Sz\'ekelyhidi-T\^o-Tong~\cite{AIM} for 
further results in this direction.  
\begin{conj}
  Suppose that $(X,\omega)$ satisfies conditions (1), (2), (3)
  above. Then the metric completion $\hat{X}$, equipped with the
  measure $\omega^n$, is an RCD space. 
\end{conj}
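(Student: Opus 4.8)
The plan is to realize $(\hat X,d_\omega,\omega^n)$ as a measured Gromov--Hausdorff limit of smooth compact K\"ahler manifolds carrying a uniform Ricci lower bound, and then to invoke the stability of the $RCD(K,2n)$ condition under measured Gromov--Hausdorff convergence \cite{AGS14,EKS15}. To construct the approximants, fix a resolution $\pi\colon Y\to X$ which restricts to an isomorphism over $X^\circ=X^{reg}\setminus D$, with reduced exceptional set $E=\pi^{-1}(X\setminus X^\circ)$ a simple normal crossings divisor, fix a reference K\"ahler metric $\omega_Y$ on $Y$ and a smooth volume form $\Omega_Y$, and for small $\varepsilon>0$ solve a complex Monge--Amp\`ere equation on $Y$ of the schematic form
\[
(\pi^*\omega_X+\varepsilon\omega_Y+\ddbar\varphi_\varepsilon)^n=c_\varepsilon\,G_\varepsilon\,\Omega_Y,
\]
where $G_\varepsilon$ is a smooth positive regularization of $e^{F\circ\pi}\cdot(\pi^*\omega_X^n/\Omega_Y)$, supplied by condition (2); set $\omega_\varepsilon:=\pi^*\omega_X+\varepsilon\omega_Y+\ddbar\varphi_\varepsilon$, so that $\mathrm{Ric}(\omega_\varepsilon)=-\ddbar\log G_\varepsilon+\mathrm{Ric}(\Omega_Y)$. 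The choices of $G_\varepsilon$ and $\Omega_Y$ should be arranged, using condition (3), so that $\mathrm{Ric}(\omega_\varepsilon)\ge-\Lambda\,\omega_\varepsilon$ with $\Lambda$ independent of $\varepsilon$.

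Granting such a family, one then has to check: (a) using Ko\l odziej-type estimates for the potentials, valid thanks to $e^F\in L^p$ in (2), together with interior higher-order estimates on $X^\circ$ from (3), that $\varphi_\varepsilon\to u$ in $C^\infty_{loc}(X^\circ)$ and that $(Y,\omega_\varepsilon)\to\hat X$ in Gromov--Hausdorff distance; (b) using the $L^p$ bound on $e^F$ again, that the volume measures $\omega_\varepsilon^n$ do not concentrate on $\pi^{-1}(X\setminus X^\circ)$, so that they converge to $\omega^n$ and the limit is genuinely $2n$-dimensional; (c) that the uniform bound $\mathrm{Ric}(\omega_\varepsilon)\ge-\Lambda\omega_\varepsilon$ passes, via stability, to the conclusion that $(\hat X,d_\omega,\omega^n)$ is $RCD(-\Lambda,2n)$.

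The main difficulty is the hypothesis needed for step (c): producing the \emph{uniform} Ricci lower bound on $\omega_\varepsilon$. Because $X$ is allowed to be non-$\mathbb{Q}$-Gorenstein, the relative volume form $\pi^*\omega_X^n/\Omega_Y$ is not modeled by a monomial $\prod|s_i|^{2a_i}_{h_i}$ with discrepancy exponents whose signs one can exploit, so it is unclear how to regularize it to $G_\varepsilon$ while preserving $\ddbar\log G_\varepsilon\le C\omega_\varepsilon$ --- this is precisely the point where the earlier approach \cite{SzRCD} had to assume an approximation scheme by constant scalar curvature metrics on a resolution. Compounding this, condition (3) only yields $\mathrm{Ric}(\omega)>-A(\omega+\omega_X)$, and wherever $\omega$ degenerates relative to the fixed metric $\omega_X$, i.e.\ near $X\setminus X^\circ$, this is strictly weaker than $\mathrm{Ric}(\omega)\ge-\Lambda\omega$; upgrading it requires quantitative control of the degeneration of $\omega$ near the discriminant and singular loci --- essentially the information that Theorem~\ref{thm:homeo1} derives \emph{from} the $RCD$ assumption. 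One would therefore need to break this circularity, for instance by first establishing, directly from (1),(2),(3), that $X\setminus X^\circ$ has $d_\omega$-Hausdorff codimension at least two.

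A more analytic variant would bypass resolutions and try to verify $RCD(-\Lambda,2n)$ on $(\hat X,d_\omega,\omega^n)$ directly: take the Dirichlet form to be the $L^2(\omega^n)$-closure of $v\mapsto\int_{X^\circ}|\nabla v|^2_\omega\,\omega^n$, show it is strongly local, regular, and has intrinsic distance $d_\omega$, show $\hat X\setminus X^\circ$ is polar for it (again using a $d_\omega$-codimension-two bound from (2)), establish a uniform Sobolev inequality and volume non-collapsing from (2) in the spirit of \cite{GS25,FGS25}, and prove the Bakry--\'Emery gradient estimate $|\nabla P_tv|^2\le e^{2\Lambda t}P_t(|\nabla v|^2)$ on $X^\circ$ from the Bochner formula and the Ricci lower bound, whence $RCD(-\Lambda,2n)$ by the standard equivalences \cite{AGS14,EKS15}. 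The obstructions are the same: upgrading condition (3) near $X\setminus X^\circ$, and establishing the negligibility of the singular set there.
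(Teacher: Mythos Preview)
The statement you are addressing is stated in the paper as an open \emph{conjecture}; the paper does not prove it, so there is no proof to compare your proposal against. Your write-up is accordingly not a proof but an honest outline of two natural strategies together with their obstructions, and the obstructions you identify are real. In the resolution approach, the step you flag as the main difficulty --- arranging a regularization $G_\varepsilon$ so that $\mathrm{Ric}(\omega_\varepsilon)\ge -\Lambda\,\omega_\varepsilon$ uniformly in $\varepsilon$, without assuming $X$ is $\mathbb{Q}$-Gorenstein or that a cscK approximation exists --- is precisely the missing ingredient, and nothing in conditions (1)--(3) supplies it. The analytic variant runs into the same wall: upgrading $\mathrm{Ric}(\omega)>-A(\omega+\omega_X)$ to $\mathrm{Ric}(\omega)\ge -\Lambda\,\omega$ near $X\setminus X^\circ$, or proving that this set is $d_\omega$-polar, requires exactly the kind of metric control that in the paper is \emph{derived from} the RCD hypothesis rather than used to establish it. Your diagnosis of the circularity is correct.

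Two remarks may help situate the problem. First, the paper does carry out a local version of your resolution strategy in the proof of Proposition~\ref{prop:coords}: on any compact subset of $X^{reg}$ it regularizes the density $F$ (via the $\tilde F_k$) to produce smooth metrics $\omega_k$ with uniform Ricci lower bounds, which then Gromov--Hausdorff converge to $\omega$ on that set. The reason this does not yield the conjecture is exactly the one you name --- the construction is confined to compact subsets of $X^{reg}$ and cannot be pushed across the singularities of $X$. Second, in the specific collapsing Calabi--Yau setting of Theorem~\ref{thm:GHlimit1} the paper \emph{does} verify the RCD property, but by a route unavailable in the generality of the conjecture: there $(\hat X,\omega_{can}^n)$ is already the measured Gromov--Hausdorff limit of the ambient Ricci-flat manifolds $(M,\omega_t)$ (Proposition~\ref{prop:nulimit}), so no approximants on a resolution of $X$ are needed.
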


In order to deduce Theorem~\ref{thm:GHlimit1} from
Theorem~\ref{thm:homeo1} the main additional ingredient is to show that
the metric completion $\hat{X}$ of $(X^\circ, \omega_{can})$, equipped with the
measure $\omega_{can}^n$, is an $RCD(K, 2n)$
space. Note that since $\hat{X}$ is the Gromov-Hausdorff limit of the
$(M,\omega_t)$, it follows by Ambrosio-Gigli-Savar\'e~\cite{AGS14}
that $\hat{X}$, equipped with the
renormalized limit measure, is 
automatically an $RCD(K, 2m)$ space, where $m=\dim_{\mathbb{C}}(M)$,
but typically $m > n$ (see also Sturm~\cite{Sturm06} and
Lott-Villani~\cite{LV09} for earlier results about $CD$-spaces).
We will follow arguments of
Gross-Tosatti-Zhang~\cite[Lemma 5.2]{GTZ13} and Fu-Guo-Song~\cite[Proof
of Theorem 1.3]{FGS20} to show that the renormalized limit measure on
$\hat{X}$ actually coincides with $\omega_{can}^n$,
which will then lead to the required $RCD(K, 2n)$-property.

We will now describe the proof of Theorem~\ref{thm:homeo1}. The basic
strategy is based on the author's previous work~\cite{SzRCD}, which in
turn uses the techniques of Donaldson-Sun~\cite{DS14}. These methods
were extended in Guo-Song~\cite{GS25} to a more general
setting of K\"ahler spaces with Ricci curvature bounded below, where
the divisor $D$ is empty. The main
new difficulty in Theorem~\ref{thm:homeo1} in comparison with earlier
works is that there could be points in the divisor $D$ that are
regular in the sense of metric tangent cones, even though the metric $\omega$
is in general singular along $D$. As a result, rescalings of the
metric on $\hat{X}$ do not necessarily have good convergence
properties on the regular sets of its tangent cones. This issue will be dealt with
in Proposition~\ref{prop:coords} below. An important ingredient is a
strict positivity result for singular K\"ahler metrics $\omega$
satisfying the conditions (1),(2),(3), that was shown in \cite{AIM},
generalizing previous work in \cite{SzRCD}. The upshot is that
near almost regular points of $\hat{X}$ in the metric sense we can find
smooth Gromov-Hausdorff approximations with Ricci curvature bounded
below. These approximations can be used to apply the results of
\cite{LiuSz1} on the construction of good local holomorphic charts. 

It is worth noting that a similar difficulty appears in the dimension
estimate for $X\setminus X^\circ$, namely that this set could contain
points that are regular in the sense of metric tangent cones. Because
of this, Cheeger-Colding's dimension estimates~\cite{CC1}, and their
extension to the RCD setting by De Philippis-Gigli~\cite{DPG18}, do
not immediately lead to the Hausdorff codimension 2 result. Instead
we  exploit the fact that locally $X\setminus X^\circ$ is cut out
by holomorphic functions.

To close the introduction we briefly mention another setting where
Theorem~\ref{thm:homeo1} can be applied. Here $M$ is a projective
manifold of dimension $m$, whose canonical line bundle $K_M$ is semiample, and we assume
that the Kodaira dimension is $n \in (0, m]$. Given an ample line
bundle $A$ on $M$, La Nave-Tian~\cite{LNT16} showed that there are
unique metrics $\omega_t\in c_1(tA+K_M)$ satisfying the equations
\[ \mathrm{Ric}(\omega_t) = -\omega_t + t\omega_A, \]
for a K\"ahler metric $\omega_A\in c_1(A)$. As above, $K_M$ induces a
holomorphic map $f: M\to X$ which is a submersion over $X^\circ$. It was shown in
Song-Tian-Zhang~\cite{STZ19} that similarly to the collapsing
Calabi-Yau setting considered above, there is a canonical metric
$\omega_{can}$ on $X^\circ$, and $(M, \omega_t)$ converge in the
Gromov-Hausdorff sense to the metric completion $\hat{X}$ of $(X^\circ,
\omega_{can})$as $t\to 0$. Essentially the same arguments as in the proof of
Theorem~\ref{thm:homeo1} imply that $\hat{X}$ is
homeomorphic to $X$, and we have the same Hausdorff dimension bounds
as in Theorem~\ref{thm:GHlimit1}. We will discuss this setting in more
detail at the end of Section~\ref{sec:collapsing}.

\subsection*{Acknowledgements}
I would like to thank Valentino Tosatti, Jian Song and S{\l }awomir Dinew
for helpful discussions. This work was supported in part by NSF grant DMS-2203218.

\section{Homeomorphism with the underlying variety}
Suppose that $(X,\omega)$ is a normal K\"ahler space satisfying the
conditions (1), (2), (3) in the introduction for a divisor $D\subset
X$. In addition, we assume that the metric completion $(\hat{X}, d)$ of
$(X^{reg}\setminus D, \omega)$ is an $RCD(K, 2n)$-space with the
volume form $\omega^n$. Note that by \cite[Theorem 1.1]{AIM} we know
that $\omega > c_0 \omega_X$ for a constant $c_0 > 0$, where
$\omega_X$ is a smooth K\"ahler metric on $X$. Our first goal in this section is
to prove the following, generalizing the author's earlier result in
\cite{SzRCD}.

\begin{thm}\label{thm:homeo2}
  The metric space $\hat{X}$ is homeomorphic to the variety $X$.
\end{thm}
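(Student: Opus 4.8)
The plan is to construct a continuous bijection $\Phi: \hat{X} \to X$ and show it is a homeomorphism by compactness. The natural candidate for $\Phi$ is the identity map on the dense subset $X^\circ = X^{reg}\setminus D$, extended by continuity. The first task is therefore to show that the metric $d$ is comparable, on compact subsets of $X^\circ$, to the distance induced by $\omega_X$; this is essentially immediate from the two-sided bound $c_0\omega_X < \omega$ (the lower bound from \cite[Theorem 1.1]{AIM} and the upper bound from conditions (1)--(3), e.g. via the local potential bounds), which shows $\Phi$ is locally bi-Lipschitz on $X^\circ$. The content is in extending $\Phi$ across $\hat{X}\setminus X^\circ$ and checking it is a bijection there.

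\medskip

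First I would establish that $\Phi$ extends to a continuous surjection $\hat{X}\to X$. Surjectivity is clear since the image is a compact set containing the dense subset $X^\circ$. Continuity of the extension amounts to showing that a $d$-Cauchy sequence in $X^\circ$ converges in $X$ (with its analytic topology); this follows because bounded-geometry estimates — coming from conditions (2),(3) and the $L^p$ bound on $e^F$ — give a definite lower bound on the $\omega_X$-volume of $d$-balls, or more directly one can embed $X$ by sections of a line bundle and bound the oscillation of the embedding functions using the $RCD$ structure together with the local potential estimates. The key technical input here, following the strategy described in the introduction, is the Donaldson--Sun / \cite{SzRCD}-type analysis near metric-regular points: at an almost-regular point of $\hat X$, condition (3) plus the positivity $\omega > c_0\omega_X$ let us produce smooth Gromov--Hausdorff approximations with Ricci bounded below, and then \cite{LiuSz1} furnishes good local holomorphic charts. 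These charts identify a neighborhood in $\hat X$ with a neighborhood in $X$ analytically.

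\medskip

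The main obstacle, and the genuinely new point compared to \cite{SzRCD, GS25}, is injectivity: one must rule out that two distinct points of $X$, or a point of $D$ and a nearby metric-completion point, get identified in $\hat X$, and conversely that a single analytic point is "split" into several metric points. The dangerous scenario is precisely a point of the divisor $D$ that is \emph{regular} in the metric tangent-cone sense even though $\omega$ degenerates there. To handle this I would argue as follows: take $p\in \hat X$ and a tangent cone at $p$; if the tangent cone is $\mathbb{R}^{2n}$, the chart from \cite{LiuSz1} gives a holomorphic embedding of a $d$-neighborhood of $p$ into $\mathbb{C}^n$ which, by the strict positivity result of \cite{AIM}, is non-degenerate, so it pulls back to a genuine analytic chart and the map to $X$ is a local homeomorphism near $p$ — in particular locally injective and open there. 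Near a point $p$ whose tangent cone is singular, one uses instead that $\Phi$ has connected fibers (a point of $X$ has a connected preimage, since the preimage is a limit of connected sets), together with the fact that $\Phi^{-1}(X^\circ)=X^\circ$ and $X\setminus X^\circ$ has empty interior, to conclude each fiber is a single point; the dimension bound on $X\setminus X^\circ$ (proved via the holomorphic defining functions, as flagged in the introduction) ensures the exceptional set is small enough not to obstruct this. Once $\Phi$ is a continuous bijection from the compact space $\hat X$ to the Hausdorff space $X$, it is automatically a homeomorphism.

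\medskip

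I expect the bijectivity step at metric-singular points of $D$ to be where most of the work lies, since there one cannot appeal to Euclidean charts and must instead combine the local structure theory of $RCD$ spaces (existence of good cutoff functions, rectifiability, the De Philippis--Gigli stratification) with the complex-analytic fact that $X\setminus X^\circ$ is locally a zero set of holomorphic functions. The remaining steps — the bi-Lipschitz comparison on $X^\circ$, surjectivity, and the final compactness argument — are routine given the cited results.
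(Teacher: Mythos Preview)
Your injectivity argument at metric-singular points has a genuine gap. The ``connected fibers'' step is neither justified nor sufficient. You assert that $\Phi^{-1}(q)$ is connected because it is ``a limit of connected sets,'' but you do not say which connected sets, and there is no general principle that makes fibers of a Lipschitz surjection from a metric completion connected. Even granting connectedness, you then need to deduce that each fiber is a point; invoking the Hausdorff dimension bound on $X\setminus X^\circ$ does not do this, since a connected fiber sitting over a single point of $X\setminus X^\circ$ could in principle be a nontrivial continuum in $\hat X$ without contradicting any codimension estimate on the \emph{image}. In short, the soft topological argument you sketch does not separate two hypothetical preimages $p_1\neq p_2$ of the same point.

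The paper does not split into a regular/singular dichotomy for injectivity. Instead it proves injectivity uniformly by a \emph{peak-section} construction: given $p_1\neq p_2$ with $\Phi(p_1)=\Phi(p_2)=q$, one works on a Stein neighborhood $\mathcal{U}\ni q$ with the trivial line bundle and weight $e^{-u}$ (where $\omega=\ddbar u$), and uses H\"ormander's $L^2$-estimate to build a holomorphic function on $\mathcal{U}$ whose weighted norm is Gaussian-concentrated at $p_1$ and small at $p_2$; since such a function extends holomorphically to $\mathcal{U}$ and is determined by its values on $X$, this forces $\Phi(p_1)\neq\Phi(p_2)$. The analytic input needed for this is that the singular set of \emph{every} iterated tangent cone of $\hat X$ has capacity zero, which is obtained by first handling cones of the form $\mathbb{R}^{2n-2}\times C(S^1_\gamma)$ and then bootstrapping as in Chen--Donaldson--Sun. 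Your regular-point discussion (smooth GH approximations, \cite{LiuSz1} charts) is in the right spirit and does appear in the paper, but there it serves as a lemma feeding into this capacity/peak-section machinery (via the $\epsilon$-regularity statement and the smooth approximation in Proposition~\ref{prop:coords}), not as a stand-alone local-homeomorphism argument. The separating-holomorphic-function idea is the missing core of your proposal.

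A smaller point: you appeal to a two-sided bound $c_0\omega_X<\omega<C\omega_X$ to get a bi-Lipschitz comparison. Only the lower bound $\omega>c_0\omega_X$ is available globally (from \cite{AIM}); there is no upper bound near $D$, and none is used. The map $\Phi$ is defined simply as the Lipschitz extension of the inclusion $(X^\circ,\omega)\hookrightarrow(X,\omega_X)$, and that one-sided bound is all that is needed for continuity and surjectivity.
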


First, observe
that the inclusion map $(X^{reg}\setminus D, \omega) \to
(X, \omega_X)$ is Lipschitz because of the lower bound $\omega > c_0
\omega_X$. It follows that 
this map has a Lipschitz extension
\[ \label{eq:Phidefn} \Phi : (\hat{X}, d_{\hat{X}}) \to (X, \omega_X) \]
to the metric completion, which is surjective by the density of
$X^{reg}\setminus D$ in $X$. To prove Theorem~\ref{thm:homeo2}
we need to show that $\Phi$ is injective.

Following the strategy of \cite{SzRCD}, we show that if $p_1 \not=
p_2$ in $\hat{X}$, with $\Phi(p_1) = \Phi(p_2) = p \in X$, then in a
neighborhood of $p$ we can construct a bounded holomorphic function $f$ on
$X^{reg}\setminus D$ whose extension to $\hat{X}$ (as complex harmonic
functions) has different values
at $p_1, p_2$. This is a contradiction, since $f$ extends continuously
as a holomorphic function to $X$ as well, showing that $\Phi$ is in fact
injective. In comparison with the proof in \cite{SzRCD}
the presence of the divisor $D$ causes substantial new technical
difficulties (see Proposition~\ref{prop:coords}), and note also that here
we are not assuming that the space is projective.

Instead of working with a global line bundle on $X$ as
in \cite{SzRCD}, we will work with a cover of $X$ with Stein
neighborhoods. Specifically, for any point $q\in X$ we can choose a Stein
neighborhood $\mathcal{U}$ of $q$, for instance the intersection of $X$ with a
Euclidean ball under a local embedding of $X$ into $\mathbb{C}^N$. We
then consider $\mathcal{V} = \mathcal{U}\cap (X^{reg}\setminus D)$. We
also choose a smaller neighborhood $q\in \mathcal{U}'\subset\subset
\mathcal{U}$ and let $\mathcal{V}' = \mathcal{V}\cap \mathcal{U}'$. We can
assume that on $\mathcal{V}$ we have $\omega = \ddbar u$ for a bounded
function $u$, and we let $L$
denote the trivial line bundle over $\mathcal{V}$, equipped with the (smooth)
Hermitian metric $h=e^{-u}$. Note that $\mathcal{V}$ admits a complete
K\"ahler metric, using Demailly~\cite{Dem82}, and so we can apply the
following H\"ormander estimate (see \cite[Theorem 4.1]{Dem82})
to the line bundle $P = L^k\otimes K_X^{-1}$
over $\mathcal{V}$, for large $k$. 

\begin{thm}\label{lem:Hormander}
  Let $(P,h_P)$ be a Hermitian holomorphic
  line bundle on a K\"ahler manifold $(M, \omega_M)$, which
  admits a complete K\"ahler metric. Suppose that the curvature
  form of $h_P$ satisfies $\sqrt{-1}F_{h_P} \geq c \omega_M$ for some constant
  $c > 0$. Let $\alpha\in \Omega^{n,1}(P)$ be such that
  $\bar\partial\alpha = 0$. Then there exists $u\in \Omega^{n,0}(P)$
  such that $\bar\partial u = \alpha$, and
  \[ \Vert u\Vert_{L^2}^2 \leq \frac{1}{c} \Vert
    \alpha\Vert^2_{L^2}, \]
  provided the right hand side is finite. 
\end{thm}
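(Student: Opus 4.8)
The plan is to recover the classical Bochner--Kodaira--Nakano estimate together with the H\"ormander--Andreotti--Vesentini $L^2$-duality argument; this is precisely the twisted version of Demailly's theorem, so I would essentially recall the standard proof, paying attention only to where completeness of $\omega_M$ enters. First I would set up the Hilbert spaces $L^2_{n,q}(M,P)$ of $(n,q)$-forms with values in $P$, with norm defined using $\omega_M$ and $h_P$, and regard $\bar\partial \colon L^2_{n,0}(M,P)\to L^2_{n,1}(M,P)$ as a closed, densely defined operator with Hilbert space adjoint $\bar\partial^*$. Since $\omega_M$ is K\"ahler, the Bochner--Kodaira--Nakano identity $\Box_{\bar\partial} = \Box_\partial + [\sqrt{-1}F_{h_P},\Lambda]$ holds with no torsion terms. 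On an $(n,1)$-form, after diagonalizing $\sqrt{-1}F_{h_P}$ relative to $\omega_M$ with eigenvalues $\gamma_1,\dots,\gamma_n\ge c$, the curvature operator $[\sqrt{-1}F_{h_P},\Lambda]$ acts on the component containing $d\bar z_k$ by multiplication by $\gamma_k \ge c$, so $\langle [\sqrt{-1}F_{h_P},\Lambda]\beta,\beta\rangle \ge c|\beta|^2$ pointwise. Discarding the nonnegative term $\langle \Box_\partial\beta,\beta\rangle$ then yields the a priori estimate $\Vert\bar\partial\beta\Vert^2 + \Vert\bar\partial^*\beta\Vert^2 \ge c\Vert\beta\Vert^2$ for smooth compactly supported $(n,1)$-forms $\beta$.

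Next I would use completeness of $\omega_M$ exactly once, through Demailly's density lemma: on a complete K\"ahler manifold the smooth compactly supported $(n,1)$-forms are dense, in the graph norm of $\bar\partial\oplus\bar\partial^*$, in $\mathrm{Dom}(\bar\partial)\cap\mathrm{Dom}(\bar\partial^*)$. This lets me extend the a priori estimate above to all $\beta \in \mathrm{Dom}(\bar\partial)\cap\mathrm{Dom}(\bar\partial^*)$. This density step, which relies on cutoff functions whose gradients are uniformly controlled by the distance function, is the only genuine technical subtlety; everything else is formal Hilbert space manipulation.

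With the extended estimate in hand, I would run the duality argument. Given $\alpha\in L^2_{n,1}(M,P)$ with $\bar\partial\alpha = 0$ and $\Vert\alpha\Vert<\infty$, decompose an arbitrary $\beta\in\mathrm{Dom}(\bar\partial^*)$ as $\beta = \beta_1 + \beta_2$ with $\beta_1\in\ker\bar\partial$ and $\beta_2\perp\ker\bar\partial$. Since $\mathrm{Im}\,\bar\partial\subseteq\ker\bar\partial$ we get $\beta_2\in\ker\bar\partial^*$, hence $\bar\partial^*\beta = \bar\partial^*\beta_1$; and $\langle\alpha,\beta\rangle = \langle\alpha,\beta_1\rangle$ because $\alpha\in\ker\bar\partial$. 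Applying the extended estimate to $\beta_1$, for which $\bar\partial\beta_1 = 0$, gives $\Vert\beta_1\Vert^2 \le c^{-1}\Vert\bar\partial^*\beta_1\Vert^2 = c^{-1}\Vert\bar\partial^*\beta\Vert^2$, so $|\langle\alpha,\beta\rangle|^2 \le c^{-1}\Vert\alpha\Vert^2\,\Vert\bar\partial^*\beta\Vert^2$. Therefore $\bar\partial^*\beta \mapsto \overline{\langle\alpha,\beta\rangle}$ is a well-defined bounded linear functional on $\mathrm{Im}\,\bar\partial^*\subseteq L^2_{n,0}(M,P)$ of norm at most $c^{-1/2}\Vert\alpha\Vert$; by Hahn--Banach and the Riesz representation theorem there is $u\in L^2_{n,0}(M,P)$ with $\Vert u\Vert^2 \le c^{-1}\Vert\alpha\Vert^2$ and $\langle u,\bar\partial^*\beta\rangle = \langle\alpha,\beta\rangle$ for all $\beta\in\mathrm{Dom}(\bar\partial^*)$, which is exactly the statement that $u\in\mathrm{Dom}(\bar\partial)$ with $\bar\partial u = \alpha$ in the weak sense. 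Finally, interior elliptic regularity for the operator $\bar\partial\oplus\bar\partial^*$ upgrades $u$ to a smooth element of $\Omega^{n,0}(P)$, and the norm bound is the desired estimate. The main obstacle, as noted, is the completeness-driven density step; the rest follows the classical template with no surprises.
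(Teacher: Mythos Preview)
The paper does not give its own proof of this statement; it simply cites \cite[Theorem 4.1]{Dem82} and uses the result as a black box. Your outline is exactly the classical Andreotti--Vesentini--H\"ormander argument that underlies Demailly's theorem, and it is correct as far as it goes.

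There is one genuine subtlety you have glossed over. The hypothesis is that $M$ \emph{admits} a complete K\"ahler metric, not that $\omega_M$ itself is complete; indeed, in the paper's application $\omega_M = k\omega$ on the open set $\mathcal{V}$, which is certainly incomplete. Your density step invokes ``completeness of $\omega_M$'', so as written the argument does not apply. Demailly's remedy is an approximation: choose a complete K\"ahler metric $\hat\omega$ on $M$, set $\omega_\epsilon = \omega_M + \epsilon\hat\omega$ (complete and K\"ahler), and run your argument with $\omega_\epsilon$ in place of $\omega_M$. One then uses the monotonicity of the pointwise quantities $|u|^2_{\omega_\epsilon}\,dV_{\omega_\epsilon}$ and $\langle [\sqrt{-1}F_{h_P},\Lambda_{\omega_\epsilon}]^{-1}\alpha,\alpha\rangle_{\omega_\epsilon}\,dV_{\omega_\epsilon}$ for $(n,q)$-forms under $\omega_M \le \omega_\epsilon$ to bound the $\omega_\epsilon$-norms of the solutions $u_\epsilon$ uniformly by $c^{-1}\Vert\alpha\Vert^2_{L^2(\omega_M)}$, and extracts a weak limit as $\epsilon\to 0$. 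This extra step is the actual content of Demailly's extension beyond the complete case and is needed for the theorem as stated.
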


We will let $M=\mathcal{V}$ with the metric $\omega_M = k\omega$ for
some $k > 0$. On $P=L^k\otimes K_X^{-1}$ we use the metric
$h^k$ on $L^k$ and $\omega^n$ on $K_X$. 
We have the following estimate for sections of $L^k$, analogous to
\cite[Proposition 19]{SzRCD}. 

\begin{prop}
  Suppose that $f$ is a holomorphic section of $L^k$ over
  $\mathcal{V}$. Then we have
  \[ \sup_{\mathcal{V}'} |f|_{h^k} + |\nabla f|_{h^k, \omega_M} \leq
    K_1 \Vert f\Vert_{L^2(\mathcal{V}, h^k, \omega_M)}, \]
  where $K_1$ does not depend on $k$. 
\end{prop}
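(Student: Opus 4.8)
The plan is to derive this as a standard consequence of the H\"ormander $L^2$-estimate combined with a Moser-type iteration / Bochner argument, where the crucial point is uniformity in $k$ coming from the rescaling $\omega_M = k\omega$.

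First I would reduce the pointwise $C^0$ bound to a mean-value inequality. Given $x\in\mathcal{V}'$, the metric $\omega_M=k\omega$ has $\mathrm{Ric}(\omega_M)=\mathrm{Ric}(\omega)\geq -A(\omega+\omega_X)\geq -\tfrac{C}{k}\omega_M$ on $X^{reg}$, and the ball $B_{\omega_M}(x,1)$ has volume bounded below (using $\omega>c_0\omega_X$ and a non-collapsing estimate on $\mathcal{U}$), so the Sobolev/Poincar\'e constants on unit $\omega_M$-balls are controlled independently of $k$. For a holomorphic section $f$ of $L^k$ with the metric $h^k$, the function $|f|_{h^k}^2$ is plurisubharmonic-type: one computes $\Delta_{\omega_M}\log|f|_{h^k}^2 \geq -\mathrm{tr}_{\omega_M}(k\sqrt{-1}F_h) = -k\,\mathrm{tr}_{\omega_M}\omega = -2n$, since the curvature of $h=e^{-u}$ is exactly $\omega$ and $\mathrm{tr}_{k\omega}(k\omega)=2n$. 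Hence $|f|_{h^k}^2$ is a subsolution of a uniformly controlled elliptic equation, and De Giorgi--Nash--Moser iteration on $B_{\omega_M}(x,1)\subset\mathcal{V}$ yields
\[
|f(x)|_{h^k}^2 \leq C_n \int_{B_{\omega_M}(x,2)} |f|_{h^k}^2\,\omega_M^n \leq C_n \Vert f\Vert_{L^2(\mathcal{V},h^k,\omega_M)}^2,
\]
with $C_n$ independent of $k$ (after passing to $\mathcal{V}'\ssubset\mathcal{V}$ so that the balls stay inside $\mathcal{V}$; here one uses that $\mathcal{V}$ is complete in the $\omega$-metric, or rather that distances in $k\omega$ grow, keeping unit balls well inside $\mathcal{U}$ for the relevant points — alternatively one enlarges $\mathcal{U}$ slightly and shrinks to $\mathcal{U}'$).

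Next the gradient bound. The Bochner formula for the holomorphic section $f$ gives, writing $\beta=|\nabla f|_{h^k,\omega_M}^2$,
\[
\Delta_{\omega_M}\beta \geq \frac{|\nabla\nabla f|^2}{\beta} - C\beta,
\]
where $C$ absorbs the Ricci lower bound $-\tfrac{C}{k}\omega_M$ together with the curvature term $k\sqrt{-1}F_h = k\omega$, whose trace against $\omega_M=k\omega$ is again the bounded quantity $2n$ (this is precisely why the bound is $k$-independent, and is the analogue of \cite[Proposition 19]{SzRCD}). Combining this with the already established $C^0$ bound on $|f|_{h^k}$ over a slightly larger ball and a cutoff, Moser iteration applied to $\beta$ (or to $\beta + |f|_{h^k}^2$) on unit $\omega_M$-balls gives $\sup_{\mathcal{V}'}\beta \leq K_1^2\Vert f\Vert_{L^2(\mathcal{V},h^k,\omega_M)}^2$, again with $K_1$ independent of $k$.

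The main obstacle I anticipate is ensuring genuine $k$-independence of all the constants. The Sobolev and Poincar\'e inequalities on unit $\omega_M$-balls require a uniform lower volume bound and uniform Ricci lower bound for $\omega_M=k\omega$; the Ricci bound follows from condition (3) since $\mathrm{Ric}(\omega)\geq -A(\omega+\omega_X)$ becomes $\mathrm{Ric}(\omega_M)\geq -\tfrac{A}{k}\omega_M - A\omega_X \geq -\tfrac{C}{k}\omega_M$ using $\omega_X \leq \tfrac{1}{c_0}\omega$ from \cite[Theorem 1.1]{AIM}, so this improves as $k\to\infty$ and is certainly bounded. The non-collapsing is where one must be slightly careful: one uses the upper bound $\omega \leq C\omega_X$ on the relatively compact $\mathcal{U}'$ together with the Bishop--Gromov inequality, or equivalently observes that geodesic balls of radius $1$ in $k\omega$ are, for $k$ large, contained in small Euclidean balls where the geometry is close to that of a ball in $(\mathbb{C}^n, \text{Eucl})$ after rescaling — but here one must account for the possibly singular behaviour of $\omega$, which is handled exactly as in \cite{AIM, SzRCD} by working on $X^{reg}\setminus D$ where everything is smooth, noting that the $RCD(K,2n)$ assumption guarantees the volume measure $\omega^n$ does not see $D$ and the completion $\hat X$ is non-collapsed of dimension $2n$. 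A secondary technical point, also inherited from \cite{SzRCD}, is that $f$ and $\nabla f$ a priori only satisfy these differential inequalities on the smooth locus $\mathcal{V}$, so the Moser iteration must be carried out there and the bound extended to $\mathcal{V}'$ by continuity; since $\mathcal{V}'\subset\mathcal{V}$ this causes no difficulty.
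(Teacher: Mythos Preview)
Your argument has a genuine gap stemming from a misconception about the geometry of $\mathcal{V}$. You write that ``$\mathcal{V}$ is complete in the $\omega$-metric'' and that the Moser balls stay inside $\mathcal{V}$ because $\mathcal{V}'\subset\mathcal{V}$; neither is true. The complete K\"ahler metric on $\mathcal{V}$ invoked in the paper (from Demailly) is an auxiliary metric used only to justify the H\"ormander estimate---it is \emph{not} $\omega$. With respect to $\omega$ (or $\omega_M=k\omega$), points of $\mathcal{V}'=\mathcal{U}'\cap(X^{reg}\setminus D)$ can be arbitrarily close to $D\cap\mathcal{U}'$, since we only know $\omega>c_0\omega_X$. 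So for $x\in\mathcal{V}'$ near $D$, the ball $B_{\omega_M}(x,r_0)$ will in general meet $D$ and the singular set, and your differential inequalities and Moser iteration, which live only on the smooth locus, do not directly apply. (Relatedly, the upper bound $\omega\le C\omega_X$ you invoke for non-collapsing is not available; only the lower bound is.)

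The paper's remedy is an extension step you are missing: since $h=e^{-u}$ with $u$ bounded and $\omega>c_0\omega_X$, a holomorphic section $f$ of $L^k$ with finite $L^2(h^k,\omega_M)$-norm is an $L^2(\omega_X)$ holomorphic function on $\mathcal{V}$, hence extends across $D$ to $\mathcal{U}\cap X^{reg}$ and then, by normality, to all of $\mathcal{U}$. This makes $f$ a bounded complex-valued harmonic function on the RCD space $\hat{\mathcal{U}}$, so Jiang's gradient estimate for harmonic functions on RCD spaces gives qualitative boundedness of $|\nabla f|_{h^k}$. Now the subsolution inequalities $\Delta_{\omega_M}v\ge -Cv$ for $v=|f|_{h^k}$ and $v=|\nabla f|_{h^k,\omega_M}$ can be used on balls $B_{\omega_M}(x,r_0)\subset\hat{\mathcal{U}}$ (which do exist uniformly in $k$, since such balls shrink in the fixed $\omega$-metric), and the RCD mean value inequality yields the $k$-independent quantitative bounds. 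Your Bochner computations are essentially the right ones, but they need to be run on the completion, and that requires the extension of $f$ first.
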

\begin{proof}
  The proof is fairly standard, similar to that in \cite{SzRCD}, once
  we extend the section to $\mathcal{U}$. 
  Since $\omega > c_0\omega_X$, and $h = e^{-u}$ for a bounded
  function $u$, a section $f$ of $L^k$ with bounded $L^2$-norm with
  respect to $h^k, \omega$  can be identified with a holomorphic
  function on $\mathcal{V}$ that is $L^2$ with respect to
  $\omega_X$. It follows that first $f$ extends across $D$ to a
  holomorphic function on $\mathcal{U}\cap X^{reg}$. Then using that
  $X$ is normal we can extend $f$ to $\mathcal{U}$. In particular $f$
  is bounded on compact subsets of $\mathcal{U}$. Using the
  gradient estimate for RCD spaces from Jiang~\cite{Jiang14} it also
  follows that $|\nabla f|_{h^k}$ is bounded on compact sets, using
  again that  the metric $h$ is $e^{-u}$ for bounded $u$.

  To obtain the estimates for $|f|_{h^k}, |\nabla f|_{h^k, \omega_M}$,
  note that both quantities satisfy differential inequalities of the
  form $\Delta v \geq -Cv$ in terms of the metric $\omega_M$. We have
  an $r_0 > 0$ such that for any point $x\in \mathcal{V}'$ the ball
  $B_{\omega_M}(x,r_0) \subset \mathcal{U}$, no matter what $k$
  is. Using this, together with the mean value inequality and gradient
  estimate for harmonic functions, we obtain the required bounds for
  $|f|, |\nabla f|$. 
\end{proof}

Our next goal is to show that the almost regular set in $\hat{X}$ in
the metric sense is contained in $X^{reg}$. More precisely, for any
$\epsilon > 0$ we define the $\epsilon$-regular set
$\mathcal{R}_\epsilon(Y)$ in a noncollapsed $2n$-dimensional
RCD space $Y$ to consist of the set of points $p$ that satisfy
\[ \lim_{r\to 0} r^{-2n} \mathrm{vol}(B(p,r)) > \omega_{2n} -
  \epsilon, \]
where $\omega_{2n}$ is the volume of the $2n$-dimensional Euclidean
unit ball. We have the following, similar to \cite[Proposition
24]{SzRCD}.
\begin{prop}\label{prop:epsilonreg}
  There is an $\epsilon_3 > 0$, depending on $(X,\omega)$, such that
  the map $\Phi$ in \eqref{eq:Phidefn} is injective on 
  $\mathcal{R}_{\epsilon_3}(\hat{X})$, and its image is contained in $X^{reg}$.  
\end{prop}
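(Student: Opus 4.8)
The plan is to combine a quantitative volume/Bishop–Gromov argument with the good holomorphic coordinates coming from \cite{LiuSz1}, following the strategy of \cite[Proposition 24]{SzRCD} but paying careful attention to the divisor $D$. First I would fix a point $p \in \mathcal{R}_{\epsilon}(\hat X)$ for $\epsilon$ small, to be determined. Since $\hat X$ is a noncollapsed $RCD(K,2n)$-space, by the almost-splitting/almost-rigidity theory (and its RCD versions, cf.\ \cite{DPG18}) the choice of $\epsilon$ small forces every tangent cone at $p$ to be $\mathbb{R}^{2n}$, so $p$ is a regular point of $\hat X$ in the metric sense with a unique Euclidean tangent cone. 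The first substantive step is to show $\Phi(p) = q \in X^{reg}$: this is exactly where the new difficulty with $D$ enters, and the key input is the strict positivity $\omega > c_0 \omega_X$ from \cite[Theorem 1.1]{AIM}, together with Proposition~\ref{prop:coords} below, which provides near an $\epsilon$-regular point $p$ smooth Gromov–Hausdorff approximations with Ricci curvature bounded below. Applying \cite{LiuSz1} to these approximations yields holomorphic coordinates on a definite-size neighborhood of $\Phi(p)$ in which $X$ is smooth; I would then argue that if $\Phi(p)$ were a singular point of $X$, the local analytic structure (e.g.\ a nontrivial local embedding dimension or a nonzero tangent cone obstruction) would contradict the existence of such coordinates, hence $\Phi(p)\in X^{reg}$.

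Having placed the image in $X^{reg}$, the second step is injectivity of $\Phi$ on $\mathcal{R}_\epsilon(\hat X)$. I would argue as in \cite{SzRCD}: suppose $p_1 \neq p_2$ with $\Phi(p_1)=\Phi(p_2)=q \in X^{reg}$. Work in a Stein neighborhood $\mathcal U$ of $q$ as set up above, with $L$ the trivial bundle and $h = e^{-u}$. Using the Hörmander estimate (Theorem~\ref{lem:Hormander}) applied to $P = L^k \otimes K_X^{-1}$ for large $k$, together with the peak-section construction, I would produce a holomorphic section of $L^k$ whose normalized $L^2$-mass concentrates near one of the preimages, say near the $2n$-Euclidean ball centered at $p_1$, but is small near $p_2$; the volume lower bound at the $\epsilon$-regular points $p_1, p_2$ (Bishop–Gromov on the $RCD(K,2n)$ space) is what makes the $L^2$-estimates uniform in $k$. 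The Proposition above (sup and gradient bounds independent of $k$) then lets me extract, via an Arzelà–Ascoli/normal families argument and the identification of $L^2$-holomorphic sections with $\omega_X$-$L^2$ holomorphic functions on $\mathcal U$, a bounded holomorphic function on $X^{reg}\setminus D$ extending (as a complex-harmonic function, using Jiang's gradient estimate \cite{Jiang14}) to $\hat X$ with distinct values at $p_1$ and $p_2$. But such a function extends holomorphically across $D$ and, by normality, to all of $\mathcal U$, hence is continuous and single-valued on $X$ near $q$, forcing the same value at $p_1$ and $p_2$ — a contradiction.

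The main obstacle I expect is the first step, namely ruling out that $\Phi(p) \in X \setminus X^{reg}$, or more precisely in $D \cap X^{reg}$ where the metric $\omega$ degenerates even though $p$ may be metrically regular. The delicate point is that a priori the rescalings of $\omega$ near such a $p$ need not converge smoothly on the regular part of the tangent cone (this is precisely the phenomenon flagged in the introduction), so one cannot directly run the Donaldson–Sun construction of holomorphic coordinates. The resolution must go through Proposition~\ref{prop:coords}: one uses the $L^p$-bound on $e^F$ and the Ricci lower bound (condition (3)) to produce genuine smooth Kähler metrics with two-sided or at least lower Ricci bounds that are Gromov–Hausdorff close to $\omega$ near $p$, and then invokes \cite{LiuSz1}. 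I would structure the argument so that Proposition~\ref{prop:coords} is the workhorse and this Proposition becomes a relatively short deduction: granted good holomorphic charts near $\Phi(p)$, both $\Phi(p)\in X^{reg}$ and the local injectivity of $\Phi$ follow by the peak-section and maximum-principle arguments sketched above, with the only remaining care being to make all constants (the size $r_0$ of balls staying inside $\mathcal U$, the concentration estimates) independent of $k$, which the stated Proposition already guarantees.
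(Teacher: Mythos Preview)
Your proposal contains a genuine circularity. You plan to make Proposition~\ref{prop:coords} the workhorse, but in the paper the proof of Proposition~\ref{prop:coords} explicitly \emph{uses} Proposition~\ref{prop:epsilonreg}: the smooth approximating metrics $\omega_k$ are constructed only on $X^{reg}$ (via a resolution $Y\to X$), and in order to apply \cite[Theorem~1.4]{LiuSz1} to them on a ball $B(p,\epsilon^{-1}r/2)$ one must first know that this ball sits inside $X^{reg}$ --- which is precisely the content of Proposition~\ref{prop:epsilonreg}. There is no evident way to reorder the logic: without knowing the ball avoids $X\setminus X^{reg}$, the $\omega_k$ are not even defined on the whole ball, so the Liu--Sz\'ekelyhidi chart construction has nothing to act on. Your diagnosis of the difficulty (rescalings of $\omega$ need not converge smoothly on the regular part of the tangent cone, because of $D$) is correct, but the proposed resolution appeals to a result that logically sits downstream.

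The paper's route avoids smooth approximations at this stage altogether. One takes bounded holomorphic functions $s_k$ cutting out $\Gamma=(X\setminus X^{reg})\cup D$ on each Stein chart, views them as complex-valued harmonic functions on $\hat X$, and proves a polynomial lower bound and a three-annulus doubling estimate for $\int_{B(q,r)}|s_k|^2$ (as in \cite[Lemmas~20,~22]{SzRCD}). Along a rescaling $(\hat X,r_j^{-1}d,p_j)\to\mathbb{R}^{2n}$ the normalized $s_k$ then converge to a nonzero harmonic $f_\infty$ on $\mathbb{R}^{2n}$. The key observation is that on compact subsets of $\mathbb{R}^{2n}\setminus f_\infty^{-1}(0)$ the corresponding approximating regions lie in $X^{reg}\setminus D$, where $\omega$ is already smooth with Ricci bounded below; \cite[Theorem~1.4]{LiuSz1} is applied \emph{there}, yielding a limit complex structure for which $f_\infty$ is holomorphic, hence $f_\infty^{-1}(0)$ has real codimension two. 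With this in hand the peak-section construction of \cite[Proposition~3.1]{LiuSz1} runs and gives $\Phi(p_j)\in X^{reg}$. Your second step (injectivity via peak sections separating $p_1$ from $p_2$) is essentially the paper's argument and is fine; the gap is entirely in how you reach $X^{reg}$.
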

\begin{proof}
  The proof mainly follows along the lines of the proof in
  \cite{SzRCD}, and we will only focus on the differences here. Note 
  that there may be additional metric singular points contained in the
  divisor $D$, so $\mathcal{R}_{\epsilon_3}(\hat{X})$ may be smaller
  than $X^{reg}$, and in addition it is not immediate that $\Phi$ is
  injective over the preimage $\Phi^{-1}(X^{reg})$. 

  Let us define $\Gamma = (X\setminus X^{reg})\cup D$, which is the
  set where $\omega$ is not assumed to be smooth. We suppose that we
  have Stein neighborhoods $\mathcal{U}_k' \subset \mathcal{U}_k$ as
  above, such that the $\mathcal{U}_k'$ cover $X$. In addition we can
  assume that we have bounded holomorphic functions $s_k$ on
  $\mathcal{U}_k$ such that $\Gamma \cap \mathcal{U}_k \subset
  s_k^{-1}(0)$. Let us write $\hat{\mathcal{U}}_k, \hat{\mathcal{U}}_k'$
  for the preimages of $\mathcal{U}_k, \mathcal{U}_k'$ under the map
  $\Phi$. The $s_k$ define bounded complex valued harmonic functions
  on the $\hat{\mathcal{U}}_k$.
  Using $\omega > c_0\omega_X$
  we also have $r_0 > 0$ such that $B(q, r_0)\subset
  \hat{\mathcal{U}}_k$ for any $q\in \hat{\mathcal{U}}_k'$.  

  We can argue exactly like in \cite[Lemma 20]{SzRCD} to show that
  there are constants $c_1, N > 0$ such that
  \[ \int_{B(q, r)} |s_k|^2\, \omega^n \geq c_1 r^N, \]
  for all $r\in (0,r_0)$ and $q\in \hat{\mathcal{U}}_k'$. Using this,
  we obtain the same result as \cite[Lemma 22]{SzRCD}. Namely, there
  are $\epsilon_0, r_0 > 0$, such that whenever $q\in
  \hat{\mathcal{U}}_k'$ and
  \[ d_{GH} (B(q, r_1) , B(0_{\mathbb{R}^{2n}}, r_1)) <
    r_1\epsilon_0, \]
  for some $r_1\in (0,r_0)$, then we have
  \[ \limsup_{r\to 0} \frac{ \fint_{B(q, r)} |s_k|^2\,
      \omega^n}{\fint_{B(q, r/2)} |s_k|^2\, \omega^n} \leq 2^{2N}. \]

  Suppose that we have a sequence of points $p_j \in \hat{X}$, and
  $r_j \to 0$ such that the pointed Gromov-Hausdorff limit of the rescaled 
  sequence $(\hat{X}, r_j^{-1}d_{\hat{X}}, p_j)$ is
  $\mathbb{R}^{2n}$. Without loss of generality we can assume that
  each $p_j$ is in one of our neighborhoods $\hat{\mathcal{U}}_k'$,
  which we denote for simplicity by $\hat{\mathcal{U}}' \subset
  \hat{\mathcal{U}}$. We will show that for sufficiently large $j$ we
  have $\Phi(p_j)\in X^{reg}$. As above, we let $L$ denote the trivial line
  bundle over $\mathcal{V} = \mathcal{U}\setminus \Gamma$,
  with metric $e^{-u}$, where $\ddbar u = \omega$.

  Let us denote by $\Omega_j$ the domains $\mathcal{U}$,
  equipped with the metric $\omega_j = r_j^{-2}\omega$, and the trivial
  line bundle $L_j$ with metric $e^{-r_j^{-2}u}$. Write
  $\hat{\Omega}_j$ for the preimage of $\Omega_j$ in $\hat{X}$, with
  the rescaled metric $r_j^{-1}d_{\hat{X}}$. Let us denote by
  $q_j\in\hat{\Omega}_j$ the points corresponding to the $p_j$. By decreasing the $r_j$,
  we can assume that on $\Omega_j$ the set $\Gamma$ is
  contained in the zero set of a holomorphic function $f_j$ (a
  suitable scaling of one of the $s_k$ above) such that
  $\Vert f_j\Vert_{L^2(B(q_j, 1))} = 1$, and in addition for any $R >
  0$ if $j$ is sufficiently large we have
  \[ \frac{\fint_{B(q_j, r)} |f_j|^2}{\fint_{B(q_j, r/2)} |f_j|^2}
    \leq 2^{2N+1}, \text{ for all } r < R. \]

  Using that the $f_j$ define complex valued harmonic functions on
  $\hat{\Omega}_j$, the above properties imply that up to choosing a
  subsequence we can extract a limit $f_\infty :
  \mathbb{R}^{2n}\to\mathbb{C}$, which is a non-zero harmonic
  function. By further decreasing the $r_j$ we can assume that
  $f_\infty$ is homogeneous. Let us write $\Sigma =
  f_\infty^{-1}(0)$. We will argue somewhat similarly
  to the argument in \cite[Proposition 23]{SzRCD} to show that
  $f_\infty$ is actually a holomorphic function, so that $\Sigma$ has
  codimension two.

  Note that if $V\subset\mathbb{R}^{2n}\setminus
  \Sigma$ is a relatively compact open set, then under the pointed convergence
  of $(\hat{\Omega}_j, q_j)$ to $(\mathbb{R}^{2n}, 0)$, $V$ is the
  limit of subsets $V_j\subset \hat\Omega_j$ where the metric
  $\omega_j$ is a smooth K\"ahler metric with Ricci curvature bounded
  below. In \cite{SzRCD} we worked with K\"ahler-Einstein metrics
  and relied on Anderson's $\epsilon$-regularity theorem, while here
  we can use \cite[Theorem 1.4]{LiuSz1} instead. The latter result implies
  that the complex structures on the $V_j$ converge to a complex
  structure on $V$ with respect to which the Euclidean metric is
  K\"ahler (see \cite[Proof of Proposition 2.5]{LiuSz1}), and for
  which $f_\infty$ is holomorphic. Using this we can 
  argue exactly as in \cite[Proposition 23]{SzRCD} to show that
  $\mathbb{R}^{2n}\setminus \Sigma$ is connected, $f_\infty$ is
  holomorphic for a complex structure on $\mathbb{R}^{2n}$, and
  therefore $\Sigma$ is a complex hypersurface defined by a
  homogeneous function.

  At this point we can closely follow the proof of \cite[Proposition
  3.1]{LiuSz1} to prove the following: there is a $C > 0$, and for 
  any $\zeta > 0$ there is a $K > 0$ (depending on $\zeta$) with the
  following property. For sufficiently large $j$ the line bundle $L_j^m$ for some $m < K$
  admits a holomorphic section $s^{(j)}$ over $\hat{\Omega}_j$
  satisfying $\Vert s^{(j)}\Vert_{L^2} \leq C$ and $\Big| |s^{(j)}(x)| - e^{-m
    d(x, q_j)^2/2}\Big| < \zeta$ for $x\in \hat{\Omega}_j$. Note that $s^{(j)}$
  here is simply a holomorphic function on $\Omega_j$, which naturally
  extends to a complex valued harmonic function on $\hat{\Omega}_j$,
  and whose norm is weighted by
  $e^{-mu}$. 

  The section $s^{(j)}$ corresponds to the constant function $1$
  as a section of the trivial line bundle over $\mathbb{C}^n$ with
  metric $e^{-|z|^2}$. By considering the coordinate functions
  $z_1,\ldots, z_n$ on
  $\mathbb{C}^n$ we can similarly construct holomorphic sections
  $s^{(j)}_1,\ldots, s^{(j)}_n$, and the ratios $s^{(j)}_i / s^{(j)}$
  converge to $z_1, \ldots, z_n$ under the Gromov-Hausdorff
  convergence. It follows from this that for sufficiently large $j$
  the point $\Phi(q_j)$, and so the corresponding $\Phi(p_j)$ as well, lies in
  $X^{reg}$. Using this, and a contradiction argument, it follows that for
  sufficiently small $\epsilon_3$ if $p\in
  \mathcal{R}_{\epsilon_3}(\hat{X})$, then $\Phi(p)\in X^{reg}$.

  We
  can see that $\Phi$ is injective on $\mathcal{R}_{\epsilon_3}$, after
  possibly shrinking $\epsilon_3$, as follows. Suppose that $\Phi(p_1)
  = \Phi(p_2)$. We can apply the argument above to the rescaled
  pointed spaces $(\Omega_j, \omega_j, q_j)$, where $\Omega_j=
  \mathcal{U}$, $\omega_j =
  r_j^{-2}\omega_j$ for a sequence $r_j \to 0$, and $q_j$ corresponding
  to $p_1$. Fixing $\zeta = \frac{1}{10}$, once $j$ is
  sufficiently large, we will find a holomorphic function $s$ such
  that $|s(p_1)|e^{-mu(p_1)} > 9/10$, while $|s(p_2)|e^{-mu(p_2)} <
  2/10$. Without loss of generality we can assume that $u(p_1) \geq
  u(p_2)$, in which case the holomorphic function $s$ on $\mathcal{U}$
  separates $p_1$ and $p_2$. This  means that in fact $\Phi(p_1)\not=
  \Phi(p_2)$. 
\end{proof}

The following is the crucial new ingredient that we need in comparison
with the arguments in \cite{SzRCD}.

\begin{prop}\label{prop:coords}
  Given any $\tau > 0$, there exists an $\epsilon > 0$ depending on
  $(X,\omega)$ and $\tau$ with the following property. Suppose that
  $r < \epsilon^2$, and for $p\in \hat{X}$ 
  \[ \label{eq:d10} d_{GH}(B(p, \epsilon^{-1}r), B_{\mathbb{C}^n}(0, \epsilon^{-1}r))
    < r\epsilon. \]
  Then there is a holomorphic chart $F : B(p,r) \to \mathbb{C}^n$,
  which is an $r\tau$-Gromov-Hausdorff approximation to its
  image. In addition we can write $\omega = \ddbar \phi$ on $B(p,r)$
  with $|\phi - d_p^2| < r^2\tau$, where $d_p$ denotes the distance
  to $p$. 
\end{prop}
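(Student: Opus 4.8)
### Proof Proposal

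The plan is to run a Donaldson--Sun-type argument at the scale $r$, but with the extra care needed because the Gromov--Hausdorff-regular point $p$ may lie in the analytic singular set $\Gamma = (X\setminus X^{reg})\cup D$. First I would rescale: set $\omega_r = r^{-2}\omega$ and work on a Stein neighborhood $\mathcal{U}$ of $\Phi(p)$ with $\omega = \ddbar u$ for bounded $u$, so that on $\hat{\Omega}_r := \Phi^{-1}(\mathcal{U})$ with the metric $r^{-1}d_{\hat X}$ we have a ball $B(q,\epsilon^{-1})$ that is $\epsilon$-close to the Euclidean ball $B_{\mathbb{C}^n}(0,\epsilon^{-1})$. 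By Proposition~\ref{prop:epsilonreg} (and its proof), once $\epsilon$ is small we know $\Phi(p)\in X^{reg}$; moreover, as in the proof of that proposition, the set $\Gamma$ is contained in the zero locus of a single holomorphic function $f$ on $\mathcal{U}$ whose $L^2$ doubling is controlled, so that on the complement of a codimension-2 complex hypersurface $\Sigma$ the rescaled spaces converge to $\mathbb{C}^n$ with its standard complex structure and Euclidean K\"ahler metric, with uniform Ricci lower bounds on the approximating smooth pieces (via \cite[Theorem 1.4]{LiuSz1}).

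The core step is the construction of holomorphic Gromov--Hausdorff approximation coordinates, following \cite[Proposition 3.1]{LiuSz1} and its use in Proposition~\ref{prop:epsilonreg}: applying the H\"ormander estimate of Theorem~\ref{lem:Hormander} to $L^m\otimes K_X^{-1}$ over $\mathcal{V} = \mathcal{U}\setminus\Gamma$ for suitable $m \le K(\tau)$, I would produce holomorphic ``peak'' sections $s^{(r)}_0,\dots,s^{(r)}_n$ on $\hat{\Omega}_r$ with $\big||s^{(r)}_0(x)|e^{-mu/2} - e^{-m\,d(x,q)^2/2}\big|$ small and the ratios $s^{(r)}_i/s^{(r)}_0$ approximating the Euclidean coordinates $z_1,\dots,z_n$ to error $\tau$ in the $C^0$ (hence Gromov--Hausdorff) sense; the key point, exactly as in the proof of Proposition~\ref{prop:epsilonreg}, is that the $L^2$ estimates and gradient bounds (using Jiang's gradient estimate for RCD spaces \cite{Jiang14} together with the boundedness of $u$) are uniform in $r$, and the sections extend across $\Gamma$ because $f$ is $L^2$ and $X$ is normal. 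Setting $F = (s^{(r)}_1/s^{(r)}_0,\dots,s^{(r)}_n/s^{(r)}_0)$ on $B(p,r)$ (unrescaled) then gives a holomorphic chart that is an $r\tau$-Gromov--Hausdorff approximation onto its image, once $\epsilon$ is small enough relative to $\tau$; injectivity of $F$ on $B(p,r)$ follows from the separation property of the peak sections as in the last paragraph of the proof of Proposition~\ref{prop:epsilonreg}.

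For the potential estimate, I would use that $\log\big(\sum_{i=0}^n |s^{(r)}_i|^2_{h^m}\big)/m$ differs from $d_q^2$ by an error $o(1)$ (again by the peak-section estimates, which pin down $|s^{(r)}_0|^2_{h^m}\approx e^{-m d_q^2}$ and control the other terms), and that $\tfrac{1}{m}\ddbar \log\sum_i |s^{(r)}_i|^2$ is the pullback under $F$ of (a multiple of) the Fubini--Study form, which converges to the Euclidean $\ddbar|z|^2$. Combining: on $B(p,r)$ one can take $\phi = \tfrac{1}{m}\log\sum_i |s^{(r)}_i|^2_{h^m} + (\text{harmonic correction})$, or more directly observe that $\omega$ and $\ddbar\big(\tfrac1m\log\sum|s^{(r)}_i|^2_{h^m}\big)$ have potentials differing by a pluriharmonic function whose $\ddbar$ vanishes, so writing $\omega = \ddbar\phi$ with $\phi$ normalized at $p$ gives $|\phi - d_p^2| < r^2\tau$ after rescaling back, provided $\epsilon\ll\tau$.

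The main obstacle I anticipate is \emph{uniformity of the H\"ormander/gradient estimates across the divisor $D$}: because $\omega$ is genuinely singular along $D$ yet $p$ is metrically regular, one must show that the smooth Ricci-bounded-below approximations on $V_j \subset \hat\Omega_j\setminus\Sigma$ suffice to run the Liu--Sz\'ekelyhidi peak-section construction with constants independent of $r$, even though no such bound holds on all of $\hat\Omega_j$. The resolution is to combine the strict positivity $\omega > c_0\omega_X$ from \cite[Theorem 1.1]{AIM} with the $L^2$-extension across $\Gamma$ (normality of $X$) and the fact that $\Gamma$ is $L^2$-polar for these weights, so that the H\"ormander solution, a priori defined on $\mathcal{V}$, automatically extends; the Ricci lower bound is then only needed on the regular part of the tangent space, which is all \cite[Theorem 1.4]{LiuSz1} and \cite[Proposition 3.1]{LiuSz1} require. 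A secondary technical point is choosing $m \le K(\tau)$ uniformly: this is where the scale-invariance of the setup (everything is phrased on $B(q,\epsilon^{-1})$) is essential, and it is handled exactly as in \cite{LiuSz1}.
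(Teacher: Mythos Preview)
Your approach has a genuine gap, and it is precisely the obstacle you flag at the end but do not actually resolve. The peak-section machinery of \cite[Theorem 1.4 and Proposition 3.1]{LiuSz1} rests on Cheeger--Colding theory: harmonic almost-splitting functions with Hessian bounds via Bochner, good cutoff functions, and Moser iteration, all of which require a smooth K\"ahler metric with Ricci curvature bounded below on the \emph{entire} ball. Your assertion that ``the Ricci lower bound is then only needed on the regular part of the tangent space'' is not how those results work, and neither $L^2$-extension across $\Gamma$ nor the bound $\omega > c_0\omega_X$ supplies the missing interior elliptic estimates across $D$. In the proof of Proposition~\ref{prop:epsilonreg} the paper does run a peak-section argument, but the convergence there is only established on sets $V_j$ away from the limit hypersurface $\Sigma$, and the conclusion ($\Phi(p_j)\in X^{reg}$) does not need uniform $C^0$ control on all of $B(p,r)$. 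Proposition~\ref{prop:coords} demands more: a chart that is an $r\tau$-GH approximation on the \emph{whole} ball, and a potential satisfying $|\phi - d_p^2|<r^2\tau$ everywhere, including across $D$. Note also that Proposition~\ref{prop:LSz2}, which packages the full peak-section construction you are invoking, appears \emph{after} Proposition~\ref{prop:coords} in the paper and uses it as input; your ordering would be circular.

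The paper's proof is fundamentally different: it constructs genuinely \emph{smooth} K\"ahler metrics $\omega_k$ on the ball $B_\omega(p,\epsilon^{-1}r/2)\subset X^{reg}$ by regularizing the log volume density $F$ (taking a max with a barrier $A_2A_3\rho - k$ on a resolution $Y\to X$, then Blocki--Kolodziej smoothing) and solving Monge--Amp\`ere equations $\omega_k^n = e^{A_1 u_k + \tilde F_k}\omega_X^n$. The crucial inputs are a uniform Ricci lower bound $\mathrm{Ric}(\omega_k) \geq -A_1\omega_k - 2A_4\omega_X$ and the uniform strict positivity $\omega_k > c_1\omega_X$ from \cite[Theorem 1.1]{AIM}. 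One then proves Gromov--Hausdorff convergence $(\Omega,\omega_k)\to(\Omega,\omega)$ via an ``almost convexity'' argument for $D_\delta$ as in \cite{STZ19,Dat16}, after which \cite[Theorem 1.4]{LiuSz1} applies directly to the smooth $\omega_k$, and the resulting charts $F_k$ and potentials $\phi_k$ are passed to the limit. This smooth-approximation step is the essential new ingredient and is absent from your proposal.
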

\begin{proof}
  If the metric $\omega$ on $B(p, \epsilon^{-1}r)$ were smooth, then
  this would be a direct application of \cite[Theorem
  1.4]{LiuSz1}. In order to apply this result, we will first
  approximate $\omega$ with smooth metrics in the Gromov-Hausdorff
  sense. 

  Note that by Proposition~\ref{prop:epsilonreg}, if $\epsilon$
  is chosen sufficiently small, then \eqref{eq:d10} implies that $B(p,
  \epsilon^{-1}r/2)$ can be identified with a subset of $X^{reg}$. By the assumption for
  $\omega$ we have $\mathrm{Ric}(\omega) > -A(\omega + \omega_X)$, and
  by the lower bound $\omega > c_0\omega_X$ it follows that
  $\mathrm{Ric}(\omega) > -A_1\omega$ for some $A_1$ on $X^{reg}$. Let
  us write $\mathrm{Ric}(\omega) = -A_1\omega + \alpha$. The basic idea
  is to try to regularize $\alpha$ using a sequence of $\alpha_k \geq 0$ that are
  smooth on $X^{reg}$, and then approximate $\omega$ with solutions of
  $\mathrm{Ric}(\omega_k) = -A_1\omega_k + \alpha_k$. This is not
  quite what we achieve, but rather we end up regularizing $\alpha +
  A'\omega$ for a sufficiently large $A'$. A technical difficulty
  arises from the fact that $\alpha$ may not be locally $\ddbar$-exact
 around the singularities of $X$, which is why we end up only regularizing on
  compact subsets of $X^{reg}$. 
  
  Consider a resolution $\pi : Y\to X$, which admits a K\"ahler metric
  $\omega_Y$. We can write the equation $\mathrm{Ric}(\omega) =
  -A_1\omega + \alpha$ as a degenerate Monge-Amp\`ere equation on
  $Y$. To be more precise we will work on $Y\setminus E$, where
  $E\subset Y$ is the exceptional divisor. Throughout we will identify
  $Y\setminus E \cong X^{reg}$. On  $Y\setminus E$ 
  we have $\omega = \omega_X + \ddbar u$, where 
  \[ (\omega_X + \ddbar u)^n = e^{A_1u + F}\omega_X^n. \] 
 It follows that on $Y\setminus E$ we have
 \[ \mathrm{Ric}(\omega) = -A_1\ddbar u - \ddbar F + \mathrm{Ric}(\omega_X), \]
 and so
 \[ \label{eq:ddbF1} \ddbar (-F) = \alpha - A_1\omega_X - \mathrm{Ric}(\omega_X). \]
 Note that $F$ is bounded below, since $u$ is bounded, and $\omega >
 c_0\omega_X$, and in fact $-F$ extends as a quasi-psh function on $Y$
 (see the proof of \cite[Proposition 3.1]{AIM}).  
 
 We have $\omega_X^n = e^g \omega_Y^n$ for a function $g$ given by 
 \[ g = \sum_i a_i \log |s_{E_i}|_{h_i}^2, \]
 where $a_i > 0$, and the $s_{E_i}$ are defining sections of the line bundles corresponding to the components $E_i$ of the exceptional divisor, with smooth Hermitian metrics $h_i$. This implies that on $Y\setminus E$ we have $\mathrm{Ric}(\omega_X) > -A_2 \omega_Y$ for some $A_2 > 0$. At the same time we can find a function $\rho$ on $Y$, such that $\rho$ is bounded above, smooth on $Y\setminus E$, and $\rho\to -\infty$ along $E$, such that for some $A_3 > 0$ we have 
 \[ A_3(\omega_X + \ddbar \rho) > \omega_Y. \]
 In particular this implies that on $Y\setminus E$ we have 
 \[ \label{eq:ddbrho1} A_2A_3\ddbar\rho > -A_2A_3\omega_X - \mathrm{Ric}(\omega_X). \]
 Let us define the functions $F_k$ by 
 \[ -F_k = \max \{ -F,  A_2A_3\rho - k\}. \]
 Note that $-F \leq -F_k \leq C$ for a uniform $C$, since $-F$ and
 $\rho$ are bounded above. 
 Using \eqref{eq:ddbF1} and \eqref{eq:ddbrho1} we get
 \[ \ddbar(-F_k) > -A_4\omega_X - \mathrm{Ric}(\omega_X), \]
 on $Y\setminus E$, for some $A_4 > 0$.
 Since $F_k$ is bounded away from $E$, we can further regularize using
 the technique of Blocki-Kolodziej~\cite{BK07} to obtain $\tilde{F}_k$
 such that on any compact subset of $Y\setminus E$ the $\tilde{F}_k$
 are smooth for sufficiently large $k$. More precisely, suppose that
 $K\subset Y\setminus E$ is compact. For small $s > 0$ depending on
 $K$, we can cover $K$ with small balls
 $B_{\omega_Y}(x_i, s)$, such that on the balls $B_{\omega_Y}(x_i,
 10s) \subset\subset Y\setminus E$ we have K\"ahler potentials $\phi_i$
 for $\omega_Y$ satisfying 
 \[ -100s^2 &<  \phi_i < 100s^2, \\
    \inf_{\partial B_{\omega_Y}(x_i, 10s)} \phi_i &> 50s^2, \\
   \sup_{B_{\omega_Y}(x_i, s)} \phi_i &< -s^2. \]
 For this note that $\phi_i$ can be chosen to be approximately equal
 to $d_{\omega_Y}(x_i, \cdot)^2 - 5s^2$. Let $\delta > 0$ be small, to
 be chosen. In each
 ball $B_{\omega_Y}(x_i, 10s)$ we can use a mollifier to construct a
 smooth function $F_{k,\delta,i}$, satisfying $|F_{k, \delta,i} - F_k| < \delta$,
 and
 \[ \ddbar(-F_{k,\delta,i}) > -A_4\omega_X - \mathrm{Ric}(\omega_X). \]
 Let $F_{k,\delta,i}' = F_{k, \delta,i} + \delta^{1/2}\phi_i$. Then
 \[ \label{eq:Fkdeltalower} \ddbar(-F_{k, \delta,i}') > -A_4\omega_X - \delta^{1/2}\omega_Y -
   \mathrm{Ric}(\omega_X), \text{ on } B_{\omega_Y}(x_i, 10s). \]
 We define $-F_{k,\delta}$ to be a regularized maximum of the function
 $-F_k$ together with all of the $-F_{k,\delta, i}'$.
 For sufficiently small $\delta >
 0$ this will be a well defined function on $Y\setminus E$, since if $y\in \partial
 B_{\omega_Y}(x_i, 10s)$ for some $i$, then necessarily
 \[  -F_{k,\delta, i}'(y) &= ( -F_{k,\delta, i}(y) + F_k(y) )+
   (F_{k,\delta, i}(y) - F_{k,\delta, i}'(y)) - F_k(y) \\
   &< \delta - 50 \delta^{1/2} s^2 - F_k(y) \\
   &< -F_k(y) - \delta, \]
 if $\delta$ is sufficiently small (depending on $s$). In addition
 $-F_{k,\delta}$ is smooth on $K$, since if $y\in B_{\omega_Y}(x_i,
 s)$ for some $i$, then
 \[ -F_{k,\delta, i}'(y) &> -F_{k,\delta, i}(y) + \delta^{1/2} s^2 \\
   &> -F_k(y) - \delta + \delta^{1/2}s^2 > -F_k(y) + \delta \]
 for sufficiently small $\delta$, and the $-F_{k, \delta, i}'$ are
 smooth in $B_{\omega_Y}(x_i, 10s)$. Finally note that outside of the union of the balls
 $B_{\omega_Y}(x_i, 10s)$ we have $F_{k,\delta} = F_k$, so
 \[ \label{eq:Fkdeltalower2} \ddbar(-F_{k, \delta}) > -A_4\omega_X - \mathrm{Ric}(\omega_X),
   \text{ on } Y\setminus \left( E \bigcup B_{\omega_Y}(x_i, 10s)\right). \]

 We can define the sequence of regularizations $\tilde{F}_k$ by
 choosing an exhaustion of $Y\setminus E$ with compact sets $K$ as
 above, and for each $K$ choosing $\delta$ sufficiently small so that
 \eqref{eq:Fkdeltalower} and \eqref{eq:Fkdeltalower2} imply that 
\[ \label{eq:ddbarF2}
 \ddbar(-\tilde{F}_k) > -2A_4\omega_X - \mathrm{Ric}(\omega_X), \]
 At the same time we can ensure that we have  $-C \leq \tilde{F}_k
 \leq F$,  for a uniform $C$, and on
 any compact subset of $\pi^{-1}(X^{reg}\setminus D)$ we have $\tilde{F}_k \to F$ smoothly. 
 
 Define $\omega_k = \pi^*\omega_X + \ddbar u_k$ to be solutions of the Monge-Amp\`ere equations
 \[ \omega_k^n = e^{A_1u_k + \tilde{F}_k}\omega_X^n. \]
 Since $\tilde{F}_k \leq F$, and $e^{pF}\omega_X^n$ is integrable for some $p > 1$, it follows from \cite[Theorem 4.1]{EGZ09} that the equation has a unique solution, and we have a uniform $L^\infty$ bound $|u_k| < C$ independent of $k$. Using the argument in \cite[Proposition 3.1]{AIM}, adapted from an argument in Guedj-Lu~\cite{GL25}, it follows that on any compact subset of $\pi^{-1}(X^{reg}\setminus D)$ the $u_k$ converge smoothly to $u$. 
 
 The Ricci curvature of $\omega_k$ on $Y\setminus E$ satisfies 
 \[ \mathrm{Ric}(\omega_k) = A_1(\omega_X - \omega_k) - \ddbar \tilde{F}_k + \mathrm{Ric}(\omega_X), \]
 and so by \eqref{eq:ddbarF2} we get
 \[ \mathrm{Ric}(\omega_k) \geq - A_1\omega_k - 2A_4\omega_X. \]
 Using \cite[Theorem 1.1]{AIM} it follows that $\omega_k > c_1
 \omega_X$, for some $c_1 > 0$ independent of $k$. The upshot is that
 on $B_{\omega}(p, \epsilon^{-1}r/2)$ we have a sequence of smooth
 K\"ahler metrics $\omega_k$, converging locally smoothly to $\omega$
 on $B_{\omega}(p, \epsilon^{-1}r/2)\setminus D$, satisfying a uniform
 Ricci lower bound $\mathrm{Ric}(\omega_k) > -A\omega_k$, and uniform
 lower bounds $\omega_k > c_1\omega_X$. 

 For simplicity let us write $\Omega = B_\omega(p, \epsilon^{-1/2}r)$.  
We claim that $\Omega$, equipped with the metrics $\omega_k$ 
converges in the Gromov-Hausdorff sense to $\Omega$ equipped with the
metric $\omega$. More precisely 
we show that for $q_1, q_2 \in \Omega$ we have $d_{\omega_k}(q_1,q_2) \to
d_{\omega}(q_1, q_2)$, and this convergence is uniform in $q_1,
q_2$. Recall that since $\omega$ may be singular along $D$, the
distance $d_\omega(q_1,q_2)$ is defined using curves lying in
$B_\omega(p, \epsilon^{-1}r/2) \setminus D$, joining sequences of
points $q_1^j \to q_1$ and $q_2^j\to q_2$.
We can closely follow arguments in Song-Tian-Zhang~\cite{STZ19}
for instance. 

For $\delta > 0$ let us denote by $D_\delta$
the set of points $q\in X$ such that $d_{\omega_X}(q, D) >
\delta$. Let us also choose $\delta_0 > 0$ such that the
$\omega_k$-distance from $\Omega$ to $\partial
B_{\omega}(p, \epsilon^{-1}r)$ is at least $2\delta_0$ for all $k$.
Such $\delta_0$ exists because of the lower bound $\omega_k >
c_1\omega_X$. 
The main ingredient is the following ``almost convexity'' result for the set
$D_\delta\cap \Omega$. We claim that
for any $\delta > 0$ there exists a $\rho > 0$ such that for all $k$ if
$q_1,q_2\in \Omega \cap D_\delta$ and
$d_{\omega_k}(q_1,q_2) < \delta_0$, then 
there is a smooth path $\gamma_k \subset B_\omega(p, \epsilon^{-1}r)\cap
D_\rho$ such that its length with respect to $\omega_k$ satisfies
\[ \ell_{\omega_k}(\gamma_k) \leq d_{\omega_k}(q_1, q_2) + \delta. \]
This closely follows the argument in \cite[Lemma 2.7]{STZ19}, or
Datar~\cite[Proposition 1.1]{Dat16}, based on relative volume
comparison. Note that our assumption means that if
$\ell_{\omega_k}(\gamma_k) < 2\delta_0$, then $\gamma_k$ is contained
in $B_\omega(p, \epsilon^{-1}r/2)$. 

Suppose now that $q_1,q_2\in \Omega$ and
$d_{\omega_k}(q_1, q_2) < \delta_0$. Since we have a uniform
$L^p$-bound for the volume forms of $\omega_k$, we can use the work of
Guo-Phong-Song-Sturm~\cite[Theorem 3.2]{GPSS2} to get a uniform
volume non-collapsing
bound $\mathrm{Vol}_{\omega_k}(B_{\omega_k}(q, r)) \geq \kappa
r^{2\mu}$ for some $\kappa,\mu > 0$, while a similar bound holds for $\omega$ by the
non-collapsed RCD property. Using the $L^p$ bound for the volume forms
of $\omega, \omega_k$ we also know that the $\omega_k$-volume of the
$\omega_X$-tubular neighborhood 
$B_{\omega_X}(D, \delta)$ satisfies
\[ \mathrm{Vol}_{\omega_k}(B_{\omega_X}(D, \delta)) < \Psi(\delta), \]
where $\Psi(\delta)$ denotes a function satisfying $\Psi(\delta) \to
0$ as $\delta \to 0$, and which we may change below, but which is
independent of $k$. The same estimate also
holds for the $\omega$-volume. This implies that for any
$q\in \Omega$, if $\delta > 0$, and we fix $k$, then
we can find some $q' \in D_\delta$ such that $d_{\omega}(q, q') <
\Psi(\delta)$ and $d_{\omega_k}(q,q') < \Psi(\delta)$. We apply this to
$q_1, q_2$ to obtain points $q_1', q_2'$. The almost convexity result
above, together with the smooth convergence of $\omega_k$ to $\omega$
on $D_\rho$ implies that for sufficiently large $k$ we have
\[ |d_{\omega}(q_1, q_2) - d_{\omega_k}(q_1,q_2)| < \Psi(\delta). \]
The claimed Gromov-Hausdorff convergence follows from this. 

We have therefore shown that under the assumption $\eqref{eq:d10}$,
for sufficiently small $\epsilon$, we can realize $B_\omega(p,
\epsilon^{-1/2}r)$, equipped with the metric $\omega$,
as a Gromov-Hausdorff limit of smooth K\"ahler
manifolds with Ricci curvature bounded below. We can then apply
\cite[Theorem 1.4]{LiuSz1} to obtain holomorphic
charts $F_k : B(p_k,r)\to \mathbb{C}^n$, and K\"ahler potentials
$\phi_k$ on balls $B(p_k, r)$ in this sequence of smooth K\"ahler
manifolds. As in \cite[Proposition 2.4]{LiuSz1}, we can pass to a
limit to obtain the required holomorphic chart $F$. We can also pass
the K\"ahler potentials $\phi_k$ to the limit to get a
plurisubharmonic function $\phi$, and the argue as in \cite[Claim 2.1
of Proposition 2.5]{LiuSz1} to see that $\phi$ is a K\"ahler potential
for $\omega$ on $B(p, r)$ satisfying the required bound. 
\end{proof}

Given this result, we can closely follow the arguments in
\cite[Proposition 3.1]{LiuSz1}. The main difference is that in the
setting of noncollapsed RCD spaces the sharp volume estimates of
Cheeger-Jiang-Naber~\cite{CJN21} do not seem to be available in the
literature. Instead, as described in \cite[Proposition 5.1]{LiuSz1},
we can use the approach of Chen-Donaldson-Sun~\cite{CDS2}, which
deals separately with the codimension two singular stratum.

As above, we focus on one of the Stein neighborhoods $\mathcal{U}$,
and $\mathcal{U}'\subset\subset \mathcal{U}$, with the trivial bundle
$L$ equipped with the metric $e^{-u}$. The proof of the following is
analogous to \cite[Proposition 3.1]{LiuSz1}.
\begin{prop}\label{prop:LSz2}
  Let $(V, o)$ be an iterated tangent cone of $\hat{X}$, with the
  property that the set $V\setminus \mathcal{R}_{\epsilon_3}(V)$ has
  capacity zero, for the $\epsilon_3$ in
  Proposition~\ref{prop:epsilonreg}. There is a constant $C > 0$, and
  for any $\zeta > 0$ there are $K, \epsilon > 0$ depending on $\zeta,
  (X, \omega), V$, with the following property. Suppose that $r > 0$
  satisfies $\epsilon^{-1}r < \epsilon$, and for some $p \in
  \hat{\mathcal{U}}'$ we have
  \[ d_{GH}(B(p, \epsilon^{-1}r), B(o, \epsilon^{-1}r)) < \epsilon
    r. \]
  Then for some $m < K$ there is a holomorphic function $s$ on
  $\mathcal{U}$ such that
  \[ \int_{\mathcal{U}} |s|^2 e^{-r^{-2}mu}\, (r^{-2}m \omega)^n < C, \]
  and 
  \[ \Big| |s(z)| e^{-mr^{-2}u} - e^{-mr^{-2}d(p, z)^2/2}\Big| <
    \zeta, \]
  for $z\in \mathcal{U}$. 
\end{prop}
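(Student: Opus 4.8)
The plan is to follow the construction of peak holomorphic sections in \cite[Proposition 3.1]{LiuSz1} essentially line by line, the two genuinely new inputs being Proposition~\ref{prop:coords}, which supplies good holomorphic charts with K\"ahler potential close to the squared distance near metric-regular points of $\hat X$ even where they lie over the divisor $D$, and the approach of Chen-Donaldson-Sun~\cite{CDS2}, which replaces the Cheeger-Jiang-Naber volume estimates used in \cite{LiuSz1} by a separate treatment of the codimension-two stratum. I would first carry out the construction on the model $(V,o)$. As in \cite{LiuSz1}, the regular part $\mathcal R_{\epsilon_3}(V)$ is a complex manifold carrying a natural Hermitian line bundle $(\mathcal L_V, h_V)$ with $\sqrt{-1}F_{h_V}$ equal to the cone metric; the holomorphic charts and potentials needed to set this up at points lying over $D$ are exactly what Proposition~\ref{prop:coords} provides. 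One then produces, for a suitable power $m$, a holomorphic section $\sigma$ of $\mathcal L_V^{\otimes m}$ over $\mathcal R_{\epsilon_3}(V)$ by $\bar\partial$-correcting the approximately holomorphic section $e^{-m\,d(o,\cdot)^2/2}$, with uniform bounds $\int_V |\sigma|^2_{h_V^m}\,\omega_V^n \le C$, $\big| |\sigma|_{h_V^m} - e^{-m\,d(o,\cdot)^2/2}\big| < \zeta/4$ on a large ball around $o$, and Gaussian decay outside; the hypothesis that $V\setminus \mathcal R_{\epsilon_3}(V)$ has capacity zero is precisely what guarantees that this $\bar\partial$-equation can be solved, and the correction passed across the singular stratum, without boundary contributions, following \cite{CDS2}. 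The bound $m<K$, with $K$ depending only on $\zeta$, $V$ and $(X,\omega)$, comes from the fact that for fixed $V$ and $\zeta$ a sufficiently large power works, uniformly over the relevant points $p$ by Gromov-Hausdorff compactness.

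To transfer the construction to $\hat X$ near $p$, I would first realize $B_\omega(p,\epsilon^{-1/2}r)$, exactly as in the proof of Proposition~\ref{prop:coords} (this part of that argument does not use the Euclidean hypothesis), as a pointed Gromov-Hausdorff limit of smooth K\"ahler balls $(B_{\omega_k}(p_k,\epsilon^{-1/2}r),\omega_k)$ with $\mathrm{Ric}(\omega_k) > -A_1\omega_k$ and $\omega_k > c_1\omega_X$, the convergence being smooth away from $D$; for $k$ large the hypothesis $d_{GH}(B(p,\epsilon^{-1}r),B(o,\epsilon^{-1}r))<\epsilon r$ then forces $B_{\omega_k}(p_k,\epsilon^{-1/2}r)$ to be Gromov-Hausdorff close to $B(o,\epsilon^{-1/2}r)$, so that on this smooth sequence we are in the setting of \cite[Proposition 3.1]{LiuSz1}, and in particular the complex structures converge on regular sets as in \cite[Proof of Proposition 2.5]{LiuSz1}. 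Using this I would pull $\sigma$ back to an approximately holomorphic section $\tilde\sigma_k$ of $L^m$ on the smooth part of a ball around $p_k$, where $L$ is the trivial bundle over $\mathcal V = \mathcal U\setminus\Gamma$ with the metric $e^{-r^{-2}u_k}$ (curvature $r^{-2}m\,\omega_k$ on $L^m$), cut it off at scale $\epsilon^{-1/2}r$ so that $\bar\partial(\chi\tilde\sigma_k)$ has tiny $L^2$ norm, and solve $\bar\partial\eta_k = \bar\partial(\chi\tilde\sigma_k)$ on $\mathcal V$ using Theorem~\ref{lem:Hormander}: the space $\mathcal V$ admits a complete K\"ahler metric by Demailly~\cite{Dem82}, and the curvature of $P = L^m\otimes K_X^{-1}$ satisfies $\sqrt{-1}F_{h_P} = r^{-2}m\,\omega_k + \mathrm{Ric}(\omega_k) > (r^{-2}m-A_1)\omega_k > \tfrac12 r^{-2}m\,\omega_k$ since $r<\epsilon^2$. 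Setting $s_k = \chi\tilde\sigma_k - \eta_k$, a holomorphic function on $\mathcal V$ (with respect to the fixed complex structure, which is independent of $k$), the interior estimate for holomorphic sections of $L^k$ established above, which is uniform in the power, bounds $|\eta_k|$ pointwise by its $L^2$ norm, so $\big| |s_k|\,e^{-r^{-2}mu_k} - e^{-r^{-2}m\,d_{\omega_k}(p_k,\cdot)^2/2}\big|<\zeta/2$ on $\mathcal U$ and $\int_{\mathcal U}|s_k|^2 e^{-r^{-2}mu_k}(r^{-2}m\,\omega_k)^n < C$, uniformly in $k$.

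Finally I would let $k\to\infty$. Each $s_k$, being holomorphic on $\mathcal V$ and $L^2$ with a weight bounded above and below, extends across $D$ to $\mathcal U\cap X^{reg}$ and then, by normality of $X$, to $\mathcal U$; the interior estimate together with the uniform bounds on $u_k$ makes the $s_k$ uniformly bounded on compact subsets of $\mathcal U$, so a subsequential limit $s$ is holomorphic on $\mathcal U$. Since $\tilde F_k\le F$ with $e^{pF}\in L^1(\omega_X^n)$, the volume forms $\omega_k^n$ are uniformly $p$-integrable and converge to $\omega^n$ in $L^1$, and $u_k\to u$ locally uniformly on $X^{reg}\setminus D$, so the $L^2$ estimate passes to $s$. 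Likewise $d_{\omega_k}\to d_\omega$ locally uniformly and $|s_k|\to|s|$ locally uniformly on $\mathcal U$, so the pointwise estimate passes to $s$ on $X^{reg}\setminus D$ and then extends to all of $\mathcal U$ by continuity of both sides, with $\zeta/2<\zeta$ leaving the needed room. This yields the required holomorphic function $s$ on $\mathcal U$.

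The step I expect to be the main obstacle is the model construction on $V$: without the Cheeger-Jiang-Naber sharp volume estimates one must carry out the production of $\sigma$ and the $\bar\partial$-correction using only that $V\setminus \mathcal R_{\epsilon_3}(V)$ has capacity zero, following the codimension-two analysis of Chen-Donaldson-Sun, and one must extract from this a uniform effective bound $m<K$. A secondary technical point is controlling the passage from $\omega_k$ to $\omega$ near $D$, which rests on the uniform $L^p$ bounds for $\omega_k^n$ and the tubular-neighborhood volume estimates already used in the proof of Proposition~\ref{prop:coords}.
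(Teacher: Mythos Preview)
Your overall architecture---build the peak section on the model cone $V$, transfer it to $\hat X$, cut off using the capacity-zero hypothesis, and correct via H\"ormander on $\mathcal V$---is exactly the strategy the paper has in mind, and your identification of Proposition~\ref{prop:coords} and the Chen--Donaldson--Sun codimension-two argument as the two new inputs is correct.

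There is, however, a genuine gap in the transfer step. You propose to ``realize $B_\omega(p,\epsilon^{-1/2}r)$, exactly as in the proof of Proposition~\ref{prop:coords} \ldots\ as a pointed Gromov--Hausdorff limit of smooth K\"ahler balls $(B_{\omega_k}(p_k,\epsilon^{-1/2}r),\omega_k)$'', claiming that this part of that argument does not use the Euclidean hypothesis. But it does: the very first step in the proof of Proposition~\ref{prop:coords} is to invoke Proposition~\ref{prop:epsilonreg}, using the Euclidean closeness \eqref{eq:d10}, to conclude that the ball lies entirely in $X^{reg}$. The regularizations $\omega_k$ are constructed on $Y\setminus E\cong X^{reg}$ only, and the Gromov--Hausdorff convergence argument at the end of that proof is carried out on a ball contained in $X^{reg}$. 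In the setting of Proposition~\ref{prop:LSz2} the ball $B(p,\epsilon^{-1}r)$ is close to a general cone $V$, not to $\mathbb{R}^{2n}$, and $p$ may well lie in $X\setminus X^{reg}$; in that case there is no ball $B_{\omega_k}(p_k,\epsilon^{-1/2}r)$ to speak of, and your appeal to \cite[Proposition 3.1]{LiuSz1} on the smooth sequence breaks down.

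The fix is to drop the detour through $\omega_k$ altogether: it is unnecessary because $\omega$ is \emph{already} smooth on $\mathcal V=\mathcal U\cap(X^{reg}\setminus D)$, and Theorem~\ref{lem:Hormander} applies there directly with the metric $r^{-2}m\,\omega$ and the line bundle $L^m\otimes K_X^{-1}$, using $\mathrm{Ric}(\omega)>-A_1\omega$ on $X^{reg}$. Proposition~\ref{prop:coords} enters only \emph{locally}, at points of $\hat X$ corresponding under the Gromov--Hausdorff approximation to points of $\mathcal R_{\epsilon_3}(V)$: near such points the cone $V$ is close to $\mathbb{R}^{2n}$, hence so is $\hat X$, and Proposition~\ref{prop:epsilonreg} places these regions inside $X^{reg}$, where Proposition~\ref{prop:coords} then supplies the holomorphic charts and K\"ahler potentials needed to pull $\sigma$ back. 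One glues these local transplants, cuts off near the (capacity-zero) preimage of $V\setminus\mathcal R_{\epsilon_3}(V)$ as in \cite{CDS2}, and solves $\bar\partial$ once, on $\mathcal V$, with respect to $\omega$ itself. This is what the paper means by ``analogous to \cite[Proposition 3.1]{LiuSz1}'', and it avoids the limiting argument in $k$ entirely.
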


If $V = \mathbb{R}^{2n-2}\times C(S^1_\gamma)$, where $S^1_\gamma$ is
a circle of length $\gamma < 2\pi$, then the singular set
$\mathbb{R}^{2n-2}\times \{0\}$ has capacity zero, so the conclusion
of Proposition~\ref{prop:LSz2} holds. As a result we obtain the
following, whose proof is essentially identical to \cite[Proposition
3.3]{LiuSz1}.

\begin{prop}\label{prop:codim2cone}
  Suppose that we have a sequence $p_j\in \hat{\mathcal{U}}'$ and
  $\lambda_j\to\infty$ such that the rescaled pointed sequence $(\hat{X},
 \lambda_j d_{\hat{X}}, p_j)$ converges to
  $\mathbb{R}^{2n-2}\times C(S^1_\gamma)$ in the pointed
  Gromov-Hausdorff sense. Then for sufficiently large $j$
  the map $\Phi$ in \eqref{eq:Phidefn} is an injection on
  $B(p_j, \lambda_j^{-1})$, its image is in
  $X^{reg}$,  and in  addition
  \[ \lim_{j\to\infty} \int_{B(p_j, \lambda_j^{-1})} n\,
    \mathrm{Ric}(\omega)\wedge \omega^{n-1} = \lambda_j^{2-2n}
    \omega_{2n-2}(2\pi - \gamma), \]
  where $\omega_{2n-2}$ is the volume of the unit ball in
  $\mathbb{R}^{2n-2}$. 
\end{prop}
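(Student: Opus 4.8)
The plan is to follow the structure of \cite[Proposition 3.3]{LiuSz1} essentially verbatim, using Proposition~\ref{prop:LSz2} as the replacement for the local holomorphic chart construction. First I would note that the hypothesis guarantees that the iterated tangent cone $V = \mathbb{R}^{2n-2}\times C(S^1_\gamma)$ has singular set $\mathbb{R}^{2n-2}\times\{0\}$, which has (complex) codimension one and hence capacity zero in $V$; thus the hypothesis of Proposition~\ref{prop:LSz2} is satisfied for this $V$. Applying that proposition along the rescaled sequence $(\hat{X}, \lambda_j d_{\hat{X}}, p_j)$, with $\zeta$ small to be fixed below, yields holomorphic functions $s = s^{(j)}$ on the Stein neighborhood $\mathcal{U}$ whose weighted norms $|s^{(j)}|e^{-m_j\lambda_j^2 u}$ approximate the Gaussian $e^{-m_j\lambda_j^2 d(p_j,\cdot)^2/2}$ on $\mathcal{U}$, with $m_j < K$ and uniform $L^2$ bound $C$. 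Taking the coordinate functions $z_1,\dots,z_n$ on the model $\mathbb{C}^n$ (where the cone $V$ itself embeds as $\mathbb{C}^{n-1}\times C(S^1_\gamma)$, the latter factor being $\mathbb{C}$ with a cone metric), I would similarly produce $s^{(j)}_1,\dots,s^{(j)}_n$ and set $F^{(j)} = (s^{(j)}_1/s^{(j)},\dots,s^{(j)}_n/s^{(j)})$; as in \cite[Proposition 3.3]{LiuSz1} these ratios converge to the coordinate maps under Gromov--Hausdorff convergence, giving for large $j$ an injective holomorphic chart on $B(p_j,\lambda_j^{-1})$ whose image lies in $X^{reg}$ (the latter because the construction shows $\Phi(p_j)$ and its neighbors admit local holomorphic coordinates, hence land in $X^{reg}$ just as in the proof of Proposition~\ref{prop:epsilonreg}).

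Next, for the curvature integral I would use the fact that under the chart $F^{(j)}$ the metric $\lambda_j^2\omega$ is close (in a suitable integral sense, away from the codimension-one singular stratum of the cone) to the pullback of the cone metric $\omega_{\mathbb{C}^{n-1}}\oplus\omega_{C(S^1_\gamma)}$. The only Ricci curvature of the limit is concentrated along the cone point of the two-dimensional factor $C(S^1_\gamma)$: a two-dimensional cone of cone angle $\gamma$ carries a distributional Gauss curvature $(2\pi-\gamma)\delta_0$, so that $\int n\,\mathrm{Ric}\wedge\omega^{n-1}$ over a small ball limits to $\omega_{2n-2}(2\pi-\gamma)$ in the appropriately rescaled units. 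Concretely, I would write $n\,\mathrm{Ric}(\omega)\wedge\omega^{n-1} = -\ddbar\log\det(g)\wedge\omega^{n-1}$ in the holomorphic chart (using that $\omega^n = e^F\omega_X^n$ and the chart identifies $X^{reg}$ with an open set of $\mathbb{C}^n$), integrate by parts against a cutoff, and match the boundary term with the corresponding integral on the smooth approximating metrics $\omega_k$ from Proposition~\ref{prop:coords}; passing $k\to\infty$ and then $j\to\infty$, the boundary integral converges to the integral of the transgression form on the link of the cone, which computes $\omega_{2n-2}(2\pi-\gamma)$. The $\lambda_j^{2-2n}$ scaling factor is just the homogeneity of $\mathrm{Ric}\wedge\omega^{n-1}$, which is scale-invariant of weight $2n-2$ in length.

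The main obstacle I anticipate is making rigorous the passage to the limit in the curvature integral, since $\omega$ is genuinely singular along $D$ and $\mathrm{Ric}(\omega)$ is only controlled by the one-sided bound $\mathrm{Ric}(\omega) > -A(\omega+\omega_X)$, so the integrand has no a priori sign and no uniform integral bound near $D$. Here I would lean on the uniform $L^p$ control of the volume forms (hence of $\omega_k^n$), the smooth convergence $\omega_k\to\omega$ on compact subsets of $\pi^{-1}(X^{reg}\setminus D)$, and the $\omega_k$-volume smallness $\mathrm{Vol}_{\omega_k}(B_{\omega_X}(D,\delta)) < \Psi(\delta)$ established in the proof of Proposition~\ref{prop:coords}, to show that the contribution of a neighborhood of $D$ to the curvature integral is negligible; the Ricci lower bound $\mathrm{Ric}(\omega_k) > -A\omega_k$ then controls the negative part of the integrand uniformly, and the upper bound is handled via integration by parts and the Gromov--Hausdorff convergence of $(B_\omega(p_j,\epsilon^{-1}\lambda_j^{-1}),\lambda_j^2\omega)$ to the cone established in Proposition~\ref{prop:coords}. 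Once these estimates are in place, the identity follows by the same computation of the cone's transgression as in \cite[Proposition 3.3]{LiuSz1}.
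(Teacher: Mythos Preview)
Your proposal is correct and matches the paper's approach: the paper simply asserts that the proof is ``essentially identical to \cite[Proposition 3.3]{LiuSz1}'' once one observes that the singular set of $V=\mathbb{R}^{2n-2}\times C(S^1_\gamma)$ has capacity zero so that Proposition~\ref{prop:LSz2} applies. You have identified exactly this reduction and supplied reasonable details for the adaptation, in particular the use of the smooth approximations $\omega_k$ from the proof of Proposition~\ref{prop:coords} to handle the possible intersection of $D$ with $B(p_j,\lambda_j^{-1})$; note that those approximations are constructed globally on compact subsets of $X^{reg}$, so they are indeed available on these balls once you know the image lies in $X^{reg}$, and the almost-convexity argument giving Gromov--Hausdorff convergence of $\omega_k$ to $\omega$ carries over verbatim.
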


Given this, we can argue exactly as in \cite[Proposition
5.1]{LiuSz1} to show that the singular set $V\setminus
\mathcal{R}_{\epsilon_3}(V)$ has capacity zero in any iterated tangent
cone of $\hat{X}$. Therefore the conclusion of
Proposition~\ref{prop:LSz2} holds for any iterated tangent cone. As a
consequence we can complete the proof of Theorem~\ref{thm:homeo2}.

\begin{proof}[Proof of Theorem~\ref{thm:homeo2}]
We can argue similarly to the end of the proof of
Proposition~\ref{prop:epsilonreg}. 
Suppose that $\Phi(p_1) = \Phi(p_2)$. Then $p_1, p_2$ are contained in
the same neighborhood $\mathcal{U}' \subset\subset \mathcal{U}$. We can
assume that the K\"ahler potential $u \leq 0$, and $u(p_1) =
0$. Applying Proposition~\ref{prop:LSz2} to the tangent cone at $p_1$
with $\zeta = \frac{1}{10}$, and sufficiently small $r$, we can
construct a holomorphic function $s$ on 
$\mathcal{U}$ such that $s(p_1) > 9/10$, while $s(p_2) < 2/10$. This
means that $\Phi(p_1) \not= \Phi(p_2)$. 
\end{proof}

We will now prove the Hausdorff dimension estimates for $X\setminus
X^{reg}$ and $X\setminus X^\circ$, which will complete the proof of
Theorem~\ref{thm:homeo1}. Note that at this point we know that
$\hat{X}$ is homeomorphic to $X$ and so we identify the two spaces. 
\begin{prop}
  We have $\mathrm{dim}_{\mathcal{H}}(X\setminus X^{reg}) \leq 2n-4$,
  and $\mathrm{dim}_{\mathcal{H}}(X\setminus X^\circ) \leq 2n-2$. 
\end{prop}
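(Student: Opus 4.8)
The plan is to prove the two Hausdorff dimension bounds separately, in each case reducing to a statement about tangent cones and then invoking the structure theory assembled above. For $X\setminus X^{reg}$: by Proposition~\ref{prop:epsilonreg} the $\epsilon_3$-regular set $\mathcal{R}_{\epsilon_3}(\hat{X})$ is contained (under the identification $\hat X \cong X$) in $X^{reg}$, so $X\setminus X^{reg} \subset \hat{X}\setminus \mathcal{R}_{\epsilon_3}(\hat{X})$. One would like to conclude $\dim_{\mathcal H} \leq 2n-4$ from the Cheeger--Colding / De Philippis--Gigli stratification, but as the authors emphasize this is not automatic: a point of $X\setminus X^{reg}$ may be metrically regular (tangent cone $\mathbb{R}^{2n}$) yet fail to be complex-analytically regular. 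So instead I would run the effective argument: by Proposition~\ref{prop:LSz2}, valid for all iterated tangent cones (as established in the paragraph after Proposition~\ref{prop:codim2cone}), at every point $p$ with tangent cone splitting off an $\mathbb{R}^{2n-1}$ factor we can produce $n$ holomorphic functions $s_1/s_0,\dots,s_n/s_0$ on a neighborhood $\mathcal U$ that give a Gromov--Hausdorff approximation; exactly as in \cite[Proposition 3.1, 5.1]{LiuSz1} this forces $p\in X^{reg}$. Hence $X\setminus X^{reg}$ contains no point whose tangent cone splits off $\mathbb{R}^{2n-1}$, i.e.\ $X\setminus X^{reg}$ lies in the stratum $\mathcal{S}^{2n-4}$ of the Cheeger--Colding stratification of the noncollapsed $RCD(K,2n)$ space $\hat X$ (using De Philippis--Gigli~\cite{DPG18}), which has Hausdorff dimension $\leq 2n-4$.

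For $X\setminus X^{\circ}$: write $X\setminus X^\circ = (X\setminus X^{reg}) \cup (D\cap X^{reg})$. The first piece has dimension $\leq 2n-4 \leq 2n-2$ by the above, so it suffices to bound $\dim_{\mathcal H}(D\cap X^{reg})$ by $2n-2$ in the metric $d_{\hat X}$. This is where one must exploit, as the introduction signals, that $X\setminus X^\circ$ is locally cut out by holomorphic functions. On each Stein chart $\mathcal U$ the set $\Gamma = (X\setminus X^{reg})\cup D$ is contained in the zero set of a bounded holomorphic function $s_k$, so $D\cap X^{reg} \cap \mathcal U \subset (s_k^{-1}(0)) \cap X^{reg}$, a complex analytic hypersurface in the smooth manifold $\mathcal U \cap X^{reg}$. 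In the smooth K\"ahler metric $\omega$ on $X^{reg}$ (which by \cite[Theorem 1.1]{AIM} satisfies $\omega > c_0\omega_X$ and, being K\"ahler, is locally comparable below to Euclidean metric in holomorphic coordinates) an analytic hypersurface has locally finite $(2n-2)$-dimensional Hausdorff measure, hence Hausdorff dimension $\leq 2n-2$. The remaining point is that this bound is with respect to the \emph{intrinsic} distance $d_{\hat X}$, not $d_{\omega_X}$; but again the lower bound $\omega > c_0\omega_X$ makes the identity map $(\mathcal V,\omega_X)\to(\mathcal V, d_\omega)$ locally Lipschitz in the direction that increases distances, so Hausdorff dimension computed with $d_{\hat X}$ is no larger than that computed with a metric comparable to $\omega$, which on each holomorphic coordinate ball is comparable to Euclidean; an analytic hypersurface is then a locally finite union of Lipschitz images of open subsets of $\mathbb{R}^{2n-2}$, giving $\dim_{\mathcal H}(D\cap X^{reg}, d_{\hat X})\leq 2n-2$. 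Summing over the countable cover $\{\mathcal U_k\}$ and taking the union with the $(2n-4)$-dimensional piece gives $\dim_{\mathcal H}(X\setminus X^\circ)\leq 2n-2$.

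I expect the main obstacle to be the first bound, specifically the passage from ``Proposition~\ref{prop:LSz2} holds at every iterated tangent cone'' to ``$X\setminus X^{reg}$ avoids the top singular strata.'' One has to be careful that the production of holomorphic coordinates near an almost-$\mathbb{R}^{2n}$ point $p$ is robust even when $p\in D$: this is exactly the content of Proposition~\ref{prop:coords} together with the argument of Proposition~\ref{prop:epsilonreg}, so the real work is just to organize a clean contradiction/covering argument showing that any $p$ in a stratum higher than $\mathcal S^{2n-4}$ would be metrically close, after rescaling, to a cone splitting $\mathbb{R}^{2n-1}$, hence (by the holomorphic chart construction) would land in $X^{reg}$. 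A secondary technical nuisance is confirming that the Cheeger--Colding--type stratification and its dimension estimate are available in the noncollapsed $RCD(K,2n)$ setting with the sharp codimension; for this one cites De Philippis--Gigli~\cite{DPG18} for the stratification and, as the authors note, substitutes the Chen--Donaldson--Sun~\cite{CDS2} treatment of the codimension-two stratum (via Proposition~\ref{prop:codim2cone}) for the Cheeger--Jiang--Naber volume bounds that are not yet in the RCD literature. The second bound is comparatively soft once one accepts that intrinsic and coordinate Hausdorff dimensions agree on $X^{reg}$, which follows from bi-Lipschitz comparability of $\omega$ with Euclidean metric in local holomorphic coordinates.
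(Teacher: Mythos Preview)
Your proposal has gaps in both halves.

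For the bound $\dim_{\mathcal H}(X\setminus X^{reg})\le 2n-4$: showing that no tangent cone at $p\in X\setminus X^{reg}$ splits an $\mathbb{R}^{2n-1}$ factor only places $X\setminus X^{reg}$ inside $\mathcal{S}^{2n-2}$, not $\mathcal{S}^{2n-4}$. (In fact the $\mathbb{R}^{2n-1}$ case is essentially vacuous: in a noncollapsed $RCD(K,2n)$ space a cone splitting $\mathbb{R}^{2n-1}$ is already $\mathbb{R}^{2n}$, so Proposition~\ref{prop:epsilonreg} covers it.) To reach $\mathcal{S}^{2n-4}$ you must also exclude cones splitting $\mathbb{R}^{2n-2}$ and $\mathbb{R}^{2n-3}$. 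The $\mathbb{R}^{2n-2}$ case is exactly Proposition~\ref{prop:codim2cone}. The $\mathbb{R}^{2n-3}$ case is the substantive step and is what the paper actually proves here: one shows the tangent cone $V=\mathbb{R}^{2n-3}\times C(Y)$ is a normal affine variety, and then, via a limiting argument with almost splitting functions on the smooth approximations from Proposition~\ref{prop:coords}, that the $\mathbb{R}^{2n-3}$ translations are holomorphic. Hence a singularity anywhere on $\mathbb{R}^{2n-3}\times\{o\}$ would propagate along a real codimension-$3$ set, contradicting normality; so $V$ is smooth, coordinates on $V$ transfer by H\"ormander to a chart near $p$, and $p\in X^{reg}$. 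Your peak-section route through Proposition~\ref{prop:LSz2} does not by itself produce an embedding when the model cone is not $\mathbb{C}^n$, so it cannot substitute for this step.

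For the bound $\dim_{\mathcal H}(X\setminus X^{\circ})\le 2n-2$: the assertion that $\omega$ is bi-Lipschitz to Euclidean in local holomorphic coordinates on $X^{reg}$ is unjustified. We only have $\omega>c_0\omega_X$; there is no upper bound on $\omega$ along $D$, and indeed $\omega$ is in general singular there. The lower bound makes $\Phi:(\hat X,d_{\hat X})\to (X,\omega_X)$ Lipschitz, which gives $\dim_{\mathcal H}(D,d_{\omega_X})\le \dim_{\mathcal H}(D,d_{\hat X})$---the wrong direction for your purpose---and your Lipschitz-parametrization of the hypersurface into $(\hat X,d_{\hat X})$ fails for the same reason. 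The paper avoids any bi-Lipschitz comparison: it writes $X\setminus X^\circ\subset (D\cap\mathcal R)\cup(\hat X\setminus\mathcal R)$, uses De~Philippis--Gigli for the second piece, and for $D\cap\mathcal R$ argues by contradiction via a density point $x$ for $\mathcal H^\ell$ with $\ell>2n-2$. Blowing up at $x$, the rescaled balls converge to a Euclidean ball and the normalized local defining functions of $D$ (with the controlled doubling from the proof of Proposition~\ref{prop:epsilonreg}) converge to a nonzero holomorphic $s_\infty$ on $\mathbb{C}^n$; Gromov--Hausdorff limits of $D$ land in $s_\infty^{-1}(0)$, and density forces $\dim_{\mathcal H}s_\infty^{-1}(0)\ge\ell>2n-2$, contradicting that a nonzero holomorphic function has zero set of dimension $\le 2n-2$.
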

\begin{proof}
  Let us first consider $X\setminus
  X^{reg}$. Proposition~\ref{prop:codim2cone} implies that if $p\in
  X\setminus X^{reg}$, then no tangent cone at $p$ can split a factor
  of $\mathbb{R}^{2n-2}$. We claim that if a tangent cone $V$ at $p$
  splits a factor of $\mathbb{R}^{2n-3}$, then it also follows that
  $p\in X^{reg}$. We expect more generally that in analogy with
  Cheeger-Colding-Tian~\cite[Theorem 9.1]{CCT02}, if a tangent cone at
  $p$ splits a factor of $\mathbb{R}^k$ for odd $k$, then in fact it splits
  $\mathbb{R}^{k+1}$, however here we will just
  deal with our special situation where $k=2n-3$.

  Suppose that a tangent cone at $p\in \hat{X}$ splits a factor of
  $\mathbb{R}^{2n-3}$, so is of the form $V = \mathbb{R}^{2n-3}\times
  C(Y)$, where $Y$ is 2-dimensional. 
  Using the techniques of
  Donaldson-Sun~\cite{DS17} and \cite{LiuSz2}, one can show that $V$
  has the structure of a normal affine algebraic variety. We will
  first show that in fact $V$ is smooth as a complex variety.

  Away from
  $\mathbb{R}^{2n-3}\times\{o\}$, where $o\in C(Z)$ is the vertex,
  each tangent cone of $V$ splits at least a factor of
  $\mathbb{R}^{2n-2}$, and so using Proposition~\ref{prop:codim2cone}
  this part of $V$ has the structure of a smooth complex manifold. At
  the same time, we claim that if $q\in \mathbb{R}^{2n-3}\times\{o\}$
  were a singular point, then $V$ would be singular along all of
  $\mathbb{R}^{2n-3}\times\{o\}$. This would contradict that by
  normality of $V$ the singular set should have real codimension at
  least 4. To see this, it is enough to show that the isometries of $V$ given
  by translations in the $\mathbb{R}^{2n-3}$ factor preserve the
  complex structure.

Let  $f : V\to\mathbb{R}$ be the projection to one of the
  $\mathbb{R}$ factors, and let $V' = V \setminus
  (\mathbb{R}^{2n-3}\times \{o\})$. Let $\epsilon > 0$,  $r < \epsilon^2$ and $B(q,
  \epsilon^{-1}r)\subset V'$ satisfying \eqref{eq:d10} as in
  Proposition~\ref{prop:coords}. As in the proof of that proposition,
  we can realize $B(q, r)$ as the Gromov-Hausdorff limit
  of a sequence of balls $B(q_k, \epsilon^{-1} r)$ in (incomplete) K\"ahler
  manifolds $(M_k, g_k)$ with Ricci
  curvature bounded below uniformly, and we can assume that the Ricci
  curvature of $g_k$ is bounded below by $k^{-1}$. Using
  \cite[Theorem 1.4]{LiuSz1} we can find holomorphic coordinates on the
  $B(q_k, r)$, which converge to holomorphic coordinates on $B(q,r)$
  under the Gromov-Hausdorff convergence
  $B(q_k, r)\to B(q,r)$. Using these coordinates we can identify the
  balls $B(q_k, r)$ with $B(q,r)$. Since the
  function $f$ defines a splitting function on $B(q, r)$, it follows
  that we have corresponding harmonic almost splitting functions $f_k$
  on $B(q_k, r)$ converging to $f$, and satisfying
  \[ \label{eq:intD2f} \fint_{B(q_k, r)} |\nabla^2 f_k|^2 \to 0. \]
  We now work in analogy with the argument in \cite[Proposition 15]{LiuSz2}. Let
  $v_k = \nabla f_k$, and $\sigma_{k, t}$ the diffeomorphisms
  generated by $v_k$. Write $v_k^{(1,0)}$ for the $(1,0)$-part of
  $v_k$. Arguing as in \cite{LiuSz2}, we can approximate the $v_k$ with
  holomorphic vector fields $\tilde{v}_k$, generating biholomorphisms
  $\tilde{\sigma}_{k, t}$, and in the limit as $k\to \infty$, the
  limiting diffeomorphisms $\sigma_t$ of the $\sigma_{k,t}$ coincide with the
  biholomorphisms $\tilde{\sigma}_t$ given by the limit of
  $\tilde{\sigma}_{k,t}$.
  At the same time the
  diffeomorphisms $\sigma_t$ are given by translation in the
  $\mathbb{R}$-factor corresponding to the splitting function $f$.

  It follows that $V$ cannot have any singularities, since otherwise
  the singular set would have real codimension at most 3. 
  So, as claimed, $V$ is a smooth complex variety. Let
  $z_1,\ldots, z_n$ be a coordinate chart in a neighborhood of the
  vertex in $V$. Using the H\"ormander $L^2$-estimates we can use
  these to construct a holomorphic chart in a sufficiently small
  neighborhood of $p\in X$, and so $p\in X^{reg}$ as claimed. It
  follows that tangent cones at points $p\in X\setminus X^{reg}$
  can split at most a factor of $\mathbb{R}^{2n-4}$, and so by the
  dimension estimates of De Philippis-Gigli~\cite{DPG18}, the
  Hausdorff dimension of $X\setminus X^{reg}$ is at most $2n-4$.

  Let us now consider the claim that
  $\mathrm{dim}_{\mathcal{H}}(X\setminus X^\circ) \leq 2n-2$. Note
  that $X\setminus X^\circ \subset (D\cap
  \mathcal{R}) \cup  (\hat{X}\setminus
  \mathcal{R})$,  where $\mathcal{R}$ denotes the metric regular set (where the
  tangent cones are $\mathbb{R}^{2n}$). The dimension estimate $\dim_{\mathcal{H}}(\hat{X}\setminus
  \mathcal{R}) \leq 2n-2$ follows from De
  Philippis-Gigli~\cite{DPG18}. Suppose that we have  $\dim_{\mathcal{H}}(D\cap
  \mathcal{R}) > 2n-2$, so that there is some $\ell>
  2n-2$ and a point of density $x\in  (D\cap
  \mathcal{R})$ for the $\ell$-dimensional
  Hausdorff measure. As in the proof of
  Proposition~\ref{prop:epsilonreg} we can now exploit that $D$ is cut out
  by a holomorphic function of finite order of vanishing. Let $r_k \to
  0$, and consider the rescaled balls $r_k^{-1}B(x, r_k)$ converging
  to the Euclidean unit ball $B(0, 1)$ in the tangent cone at $x$. We
  can assume that $D\cap B(x,r_1) = s^{-1}(0)$ for a holomorphic
  function $s$ on $B(x,r_1)$. Let us denote by $s_k$ the holomorphic
  function $s$ normalized to have $L^2$-norm 1 on the rescaled ball
  $r_k^{-1}B(x, r_k)$. As in the proof of Proposition~\ref{prop:epsilonreg} we can assume that
  up to choosing a subsequence, after suitable normalizations the
  $s_k$ converge to a non-zero holomorphic function $s_\infty$ on
  $B(0,1)\subset \mathbb{C}^n$, and under the Gromov-Hausdorff
  convergence the points of $D\cap B(x,r_k)$ converge to the zero set
  $s_{\infty}^{-1}(0)$. Using that $x$ was a point of density for the
  $\ell$-dimensional Hausdorff measure, it follows that $s_{\infty}^{-1}(0)$ has Hausdorff
  dimension at least $\ell$, if the $r_k$ are chosen suitably. But this contradicts that the zero set of
  a non-zero holomorphic function has Hausdorff dimension at most $2n-2$.
\end{proof}

\section{Collapsing Calabi-Yau metrics}\label{sec:collapsing}
In this section our goal is to prove Theorem~\ref{thm:GHlimit1},
by showing that the setting of collapsing
Calabi-Yau metrics fits into the
assumptions of Theorem~\ref{thm:homeo1}. Recall that $M$ is a (smooth)
projective manifold of dimension $m$ with trivial first Chern class,
together with a semiample line bundle $L$, giving rise
to a holomorphic map $f : M\to X$, where $\dim X = n$. We let $A$ denote an ample line
bundle on $M$, and consider the Ricci flat metrics $\omega_t \in
c_1(L) + t c_1(A)$ for $t > 0$, whose existence is provided by Yau's
Theorem~\cite{Yau78}.

More precisely, we let $\Omega$ be a smooth
volume form on $M$ which satisfies $\mathrm{Ric}(\Omega)=0$, and
$\int_M \Omega = 1$. Choosing suitable metrics $\omega_M\in c_1(A),
\omega_X\in c_1(L)$ on
$M, X$, we can write $\omega_t = f^*\omega_X + t\omega_M + \ddbar\psi_t$,
satisfying the Monge-Amp\`ere equation
\[ \omega_t^m = t^{m-n} c_t \Omega. \]
Integrating, we find that the normalizing constants $c_t$ satisfy
\[c_0 := \lim_{t\to 0} c_t = \binom{m}{n} c_1(L)^n \cup
  c_1(A)^{m-n}. \]

There is a complex subvariety $B\subset X$, such that $f$ is a
holomorphic submersion over $X^\circ = X \setminus B$, and $X^\circ$
is smooth. It was shown in Song-Tian~\cite{ST12} that $X$ admits a
twisted K\"ahler-Einstein metric $\omega_{can} = \omega_X + \ddbar u$
with $u\in L^\infty$, 
satisfying
\[ \mathrm{Ric}(\omega_{can}) = \omega_{WP} \]
on $X^\circ$.  Here $\omega_{WP} \geq 0$ is the Weil-Petersson metric given
by the variation of the Calabi-Yau fibers of $f$ over $X^\circ$. On $X^\circ$ the metric
$\omega_{can}$ satisfies the Monge-Amp\`ere equation
\[ \label{eq:canMA} \omega_{can}^n = d_1 f_*\Omega = e^F \omega_X^n, \]
where $d_1 = \int_X \omega_X^n$. 
It is shown \cite[Lemma 2.3]{GS21} that $F$ is bounded below, and
from \cite[Proposition 3.2]{ST12} we have that $e^F \in
L^p(\omega_X^n)$ for some $p > 1$ (note the differences between
our definition for $F$ and the ones used in these references).
It follows that on $X^{reg}$ we can
locally write $\mathrm{Ric}(\omega_{can}) = \ddbar v$, where $v\in
L^1_{loc}$, and $v$ is locally bounded from above. Since
$\mathrm{Ric}(\omega_{can}) \geq 0$ on $X^{\circ}$, it follows from
this that $\mathrm{Ric}(\omega_{can}) \geq 0$ on $X^{reg}$ in the
distributional sense. As a consequence, $(X, \omega_{can})$ satisfies
the conditions (1), (2), (3) in Theorem~\ref{thm:homeo1}, taking $D$
to be a divisor containing the subvariety $B$. 

In order to prove Theorem~\ref{thm:GHlimit1}, it remains to show that
the metric completion $\hat{X}$ is an $RCD(0, 2n)$-space. Note
that $\hat{X}$ is an almost smooth metric measure space in the sense
of \cite[Definition 5]{SzRCD}, which is a slight modification of the
notion due to Honda~\cite{Honda18}. Just as in \cite[Proposition
9]{SzRCD}, by using the work of Honda~\cite{Honda18}, and
Guo-Phong-Song-Sturm~\cite{GPSS2}, we have the following.

\begin{prop}\label{prop:gradestimate}
  Suppose that the $W^{1,2}$-eigenfunctions of the Laplacian on
  $(X^\circ, \omega_{can})$ are Lipschitz. Then $\hat{X}$ is a
  (non-collapsed) $RCD(0, 2n)$-space. 
\end{prop}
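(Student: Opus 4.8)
The plan is to show that $\hat X$, with volume measure $\omega_{can}^n$, fits the "almost smooth metric measure space" framework and that the Lipschitz eigenfunction hypothesis is exactly what is needed to upgrade it to a genuine $RCD(0,2n)$ space. First I would verify the almost smoothness in the sense of \cite[Definition 5]{SzRCD}: the regular part is $X^\circ=X^{reg}\setminus D$, which is open, dense, and carries the smooth Ricci-flat-twisted metric $\omega_{can}$ with $\mathrm{Ric}(\omega_{can})=\omega_{WP}\geq 0$, so in particular $\mathrm{Ric}\geq 0$ there; the complement $\hat X\setminus X^\circ$ has measure zero for $\omega_{can}^n$ since $X\setminus X^\circ$ is a proper subvariety union a divisor and $e^F\in L^p$; and by \cite[Theorem 1.1]{AIM} we have $\omega_{can}>c_0\omega_X$, which gives a local uniform geometry near the singular set. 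One also needs that $(X^\circ,\omega_{can})$ is geodesically convex in $\hat X$ in the appropriate weak sense, which follows from the almost-convexity/relative volume comparison arguments already invoked in the proof of Proposition~\ref{prop:coords} (cf. \cite{STZ19}).

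Next I would recall the key structural input: by Guo-Phong-Song-Sturm~\cite{GPSS2} the uniform $L^p$-bound on $e^F$ yields a uniform (non-collapsed) volume lower bound $\mathrm{Vol}_{\omega_{can}}(B(x,r))\geq \kappa r^{2n}$ and, more importantly, a Sobolev inequality and a local uniform diameter/volume control on $(X^\circ,\omega_{can})$. This is what makes the space non-collapsed with dimension exactly $2n$, and it also guarantees that the heat semigroup and the Dirichlet form on $(X^\circ,\omega_{can})$ behave well. Then I would follow the argument of \cite[Proposition 9]{SzRCD} essentially verbatim: one shows that the metric measure space $(\hat X, d, \omega_{can}^n)$ satisfies the Bochner inequality $BE(0,2n)$ in the weak form, using that on the smooth open part $\mathrm{Ric}\geq 0$ and $\dim_{\mathbb C}=n$ give the pointwise Bochner inequality, and the Lipschitz property of eigenfunctions lets one integrate this against test functions across the (measure-zero, capacity-controlled) singular set without boundary terms. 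Combined with the infinitesimal Hilbertianity — which holds because locally, away from the singular set, the space is smooth Riemannian, and globally because $\hat X$ is already known to be an $RCD(K,2m)$ space as a Gromov-Hausdorff limit (hence infinitesimally Hilbertian) — one obtains the $RCD(0,2n)$ condition by the characterization of \cite{AGS14, EKS15, Honda18}.

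The main obstacle, and the reason for the hypothesis, is controlling the behavior of $W^{1,2}$ functions across $\hat X\setminus X^\circ$: a priori the Sobolev space on $(X^\circ,\omega_{can})$ could fail to agree with the one on $\hat X$, and integration by parts in the Bochner inequality could pick up a contribution from the singular set. The Lipschitz bound on eigenfunctions is precisely the tool used (following Honda~\cite{Honda18}) to show that the singular set is $W^{1,2}$-negligible, so that the spectral-theoretic and energy-theoretic descriptions of the heat flow coincide and the Bakry-\'Emery estimate propagates globally. Once that is in hand, the $RCD(0,2n)$ property — in particular the sharp dimension $2n$ rather than the $2m$ coming from the ambient Gromov-Hausdorff limit — follows, and feeds into Theorem~\ref{thm:homeo1} to conclude Theorem~\ref{thm:GHlimit1}. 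I expect the proof to be short, consisting mostly of checking hypotheses and citing \cite[Proposition 9]{SzRCD}, \cite{Honda18}, \cite{GPSS2}, and \cite{AIM}.
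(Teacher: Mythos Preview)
Your proposal is correct and matches the paper's approach: the paper does not give a detailed proof but simply observes that $\hat X$ is an almost smooth metric measure space in the sense of \cite[Definition 5]{SzRCD} and then cites \cite[Proposition 9]{SzRCD}, Honda~\cite{Honda18}, and Guo--Phong--Song--Sturm~\cite{GPSS2} exactly as you anticipate. Your write-up in fact fills in more of the verification of the almost-smooth hypotheses than the paper does, but the structure and the key citations are identical.
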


In order to show that the eigenfunctions are Lipschitz, we will
exploit the fact that the metric space $\hat{X}$ is the
Gromov-Hausdorff limit of the Ricci flat manifolds $(M, \omega_t)$ as
$t\to 0$, according to Song-Tian-Zhang~\cite[Proposition
3.1]{STZ19}. As shown by Cheeger-Colding~\cite{CC1}, the
Gromov-Hausdorff limit can be equipped with a renormalized limit
measure $\nu$ which we now recall.

We fix basepoints $p_t\in (M,
\omega_t)$, and $p\in \hat{X}$, such that $p_t \to p$ under the
Gromov-Hausdorff convergence. Note that we can take $p\in X^\circ$,
and $p_t \in f^{-1}(p)$ to be independent of $t$, but we write $p_t$
to emphasize that we use the varying metrics $\omega_t$. Then for any $x\in
(M, \omega_t)$ and $r > 0$ we define the renormalized volume function to be
\[ \underline{V}(x,r) =
  \frac{\mathrm{Vol}(B_{\omega_t}(x,r))}{\mathrm{Vol}(B_{\omega_t}(p_t,
    1))}. \]
By \cite[Theorem 1.6]{CC1} we have a continuous function
$\underline{V}_\infty : \hat{X}\times \mathbb{R}_+ \to \mathbb{R}_+$
such that along a sequence $t_j\to 0$ we have the following: for every
$q_j \in (M, \omega_{t_j})$ with $q_j \to x\in \hat{X}$, and every $R
> 0$, we have
\[ \underline{V}(q_j, R) \to \underline{V}_\infty(x, R). \]
By \cite[Theorem 1.10]{CC1} the renormalized limit measure $\nu$ on
$\hat{X}$ is then uniquely defined by the property that $\nu(B(x, R))
= \underline{V}_\infty(x, R)$ for all $x\in \hat{X}$ and $R > 0$.

We next show that $\nu = v \omega_{can}^n$ for a constant $v > 0$. For this we can
follow arguments similar to Gross-Tosatti-Zhang~\cite[Lemma 5.2]{GTZ13} and
Fu-Guo-Song~\cite[Proof of Theorem 1.3]{FGS20}. 

\begin{prop}\label{prop:nulimit}
Up to normalizing the measure, the metric measure space $(\hat{X},
d_{\hat{X}}, \omega_{can}^n)$ is the measured Gromov-Hausdorff
limit of the $(M, d_{\omega_t}, \omega_t^m)$.
\end{prop}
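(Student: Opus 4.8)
The plan is to compute the pushforward measure $f_*\omega_t^m$ on $X$ exactly using the Monge--Amp\`ere equation, and then transport this identity to the Gromov--Hausdorff limit, following \cite[Lemma 5.2]{GTZ13} and \cite[Proof of Theorem 1.3]{FGS20}. Set $a_t = d_1/(t^{m-n}c_t)$ (positive for small $t$ since $c_t\to c_0 > 0$), and let $\mu_t = a_t\,\omega_t^m$, so $\mu_t(M) = d_1$ for all $t$. Applying $f_*$ to $\omega_t^m = t^{m-n}c_t\,\Omega$ gives $f_*\mu_t = d_1\,f_*\Omega$, while \eqref{eq:canMA} says $\omega_{can}^n = d_1\,f_*\Omega$ on $X^\circ$. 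Both $\omega_{can}^n = e^F\omega_X^n$ and $f_*\Omega$ assign zero mass to $B = X\setminus X^\circ$ — for $f_*\Omega$ because $f^{-1}(B)$ is a proper analytic subset of $M$, hence Lebesgue-null — so in fact
$$ f_*\mu_t = \omega_{can}^n \qquad \text{as Borel measures on } X, $$
for every $t > 0$. In particular $\mu_t(f^{-1}(E)) = \omega_{can}^n(E)$ for every Borel $E\subseteq X$, and $f^{-1}(B)$ is $\mu_t$-null.

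The second step is to show that $\mu_t \to \omega_{can}^n$ weakly under the Gromov--Hausdorff convergence $(M, d_{\omega_t}) \to (\hat{X}, d_{\hat{X}})$ of Song--Tian--Zhang~\cite{STZ19}, where $\omega_{can}^n$ is viewed as a Radon measure on $\hat{X}$ through the inclusion $X^\circ\hookrightarrow\hat{X}$. Fix $\phi\in C(\hat{X})$ and $\delta > 0$, and let $K_\delta = \{y\in X : d_{\omega_X}(y, B)\geq\delta\}$, a compact subset of $X^\circ$. By the $C^\infty_{loc}$-convergence $\omega_t\to f^*\omega_{can}$ of Hein--Tosatti~\cite{HT25} together with the collapse of the fibers over $K_\delta$, the Gromov--Hausdorff approximations $\Phi_t : (M,\omega_t)\to\hat{X}$ can be chosen so that $\sup_{f^{-1}(K_\delta)} d_{\hat{X}}(\Phi_t(\cdot), f(\cdot)) \to 0$ (this is part of the identification of the limit in \cite{STZ19}). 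Hence, using the first step,
$$ \int_{f^{-1}(K_\delta)} (\phi\circ\Phi_t)\, d\mu_t \;=\; \int_{K_\delta}\phi\; d\omega_{can}^n \;+\; o(1) \qquad (t\to 0). $$
On the complement, $\mu_t\big(M\setminus f^{-1}(K_\delta)\big) = \omega_{can}^n(X\setminus K_\delta)$ for all $t$, and $\omega_{can}^n(X\setminus K_\delta)\to 0$ as $\delta\to 0$ because $\omega_{can}^n(B) = 0$. Combining the two contributions, $\limsup_{t\to 0}\big|\int_M(\phi\circ\Phi_t)\,d\mu_t - \int_{\hat{X}}\phi\,d\omega_{can}^n\big| \leq 2\|\phi\|_{\infty}\,\omega_{can}^n(X\setminus K_\delta)$ for every $\delta > 0$, so the left-hand side vanishes. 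This says precisely that $(M, d_{\omega_t}, \mu_t)$ converges to $(\hat{X}, d_{\hat{X}}, \omega_{can}^n)$ in the measured Gromov--Hausdorff sense; since $\mu_t$ is a fixed rescaling of $\omega_t^m$, this is the proposition. It also follows that the renormalized limit measure $\nu$ of Cheeger--Colding on $\hat{X}$ equals $\omega_{can}^n$ up to a constant, and in particular $\nu(\hat{X}\setminus X^\circ) = 0$.

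I expect the main obstacle to be the compatibility of the collapsing Gromov--Hausdorff convergence with the fibration over the (a priori large) compacta $K_\delta\subset X^\circ$ — that is, choosing $\Phi_t$ with $\Phi_t\approx f$ on $f^{-1}(K_\delta)$, which amounts to the distance comparison $d_{\omega_t}(x_t, x_t') \to d_{\hat{X}}(f(x_t), f(x_t'))$ for points projecting into $K_\delta$. This is exactly what underlies the identification of $\hat{X}$ with the metric completion of $(X^\circ, \omega_{can})$ in \cite{STZ19}, and it relies on the smooth convergence of \cite{HT25}. The remaining potential issue, that $\mu_t$-mass might concentrate near $f^{-1}(B)$ in the limit, does not actually arise: by the exact pushforward identity that mass equals $\omega_{can}^n$ of a shrinking tubular neighborhood of $B$, and hence tends to zero uniformly in $t$. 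This is the same mechanism exploited in \cite[Lemma 5.2]{GTZ13} and \cite[Proof of Theorem 1.3]{FGS20}.
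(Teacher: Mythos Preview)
Your proof is correct and rests on the same core ingredients as the paper --- the Monge--Amp\`ere equation $\omega_t^m = t^{m-n}c_t\Omega$, the identity $\omega_{can}^n = d_1 f_*\Omega$, and the $C^0$-convergence of $\omega_t$ to $f^*\omega_{can}$ on compacta of $M^\circ$ --- but packages them differently. The paper works through the Cheeger--Colding renormalized limit measure $\nu$: it uses the ball containment $f^{-1}(B_{\omega_{can}}(x, r-h(t))) \subset B_{\omega_t}(q,r) \subset f^{-1}(B_{\omega_{can}}(x, r+h(t)))$ coming from $C^0$-convergence to compute $\nu$ of small balls in $X^\circ$, and then separately invokes the uniform diameter bound plus Bishop--Gromov volume comparison to match the total masses $\nu(\hat{X})$ and $\omega_{can}^n(X)$. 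You instead verify measured Gromov--Hausdorff convergence directly via test functions, and your key observation is the \emph{exact} pushforward identity $f_*\mu_t = \omega_{can}^n$ (equivalently, $\mu_t = d_1\Omega$ is literally independent of $t$), which immediately gives the uniform-in-$t$ bound on the mass outside $f^{-1}(K_\delta)$ and makes the volume comparison step unnecessary. This is a slightly cleaner route; the paper's organization is perhaps more natural when one is already committed to the Cheeger--Colding framework. As you anticipated, the compatibility $\Phi_t\approx f$ on $f^{-1}(K_\delta)$ is the one nontrivial input, and it is indeed what underlies \cite[Proposition 3.1]{STZ19}; note that only the $C^0$-convergence of \cite{TWY18} is needed here, not the full smooth convergence of \cite{HT25}.
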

\begin{proof}
To show that $\nu = v\omega_{can}^n$ for a constant $v > 0$, it
is enough to show the following:
\begin{itemize}
  \item[(1)] For any compact set $K\subset X^\circ$ there is an $r_K >
    0$ such that
    \[ \nu(B(x, r)) = v\omega_{can}^n(B(x,r))\, \quad \text{ for
      } x\in K, r < r_K. \]
  \item[(2)] We have $\nu(\hat{X}) = v\omega_{can}^n(X)$.
  \end{itemize}

  Indeed, from (1) it follows that for any compact set $K\subset
  X^\circ$ we have $\nu(K) = v \omega_{can}^n(K)$. At the same
  time we also have that the measure $\nu$ on $\hat{X}$ is obtained by
  extending its restriction to $X^\circ$ trivially. This is because
  for any $\epsilon > 0$ we can find a compact set $K\subset X^\circ$
  such that $\omega_{can}^n(K) > \omega_{can}^n(X) -
  \epsilon$. The conditions (1) and (2) above imply that $\nu(K) >
  \nu(\hat{X}) - v\epsilon$.

  To see the condition (1) above, we use the $C^0$-convergence of the
  metrics $\omega_t$ to $f^*\omega_{can}$ on $f^{-1}(K)$ for any
  compact $K\subset X^\circ$ (see \cite{TWY18}). For any $x\in
  X^\circ$, we can choose $q_j = q \in f^{-1}(x)$ so that $q_j \to x$
  under the Gromov-Hausdorff convergence. Using the $C^0$ convergence
  above, we can find an $r_0 > 0$ and a function $h(t)\to 0$ such that
  for any $r < r_0$ we have
  \[ \label{eq:Bcontain} f^{-1}(B_{\omega_{can}}(x, r-h(t_j))) \subset
    B_{\omega_{t_j}}(q_j, r) \subset f^{-1}(B_{\omega_{can}}(x, r +
    h(t_j))). \] 
  For any $R > 0$ we also have 
  \[ \mathrm{Vol}(f^{-1}(B(x, R), \omega_t)) &=
    \int_{f^{-1}(B(x,R))} \omega_t^m \\
    &= t^{m-n}c_t \int_{f^{-1}(B(x,R))} \Omega \\
    &= t^{m-n} c_t \int_{B(x,R)} f_*\Omega. \]
  It follows that
  \[ \lim_{t\to 0} t^{n-m} \mathrm{Vol}(f^{-1}(B(x,R)), \omega_t)
    = c_0 \int_{B(x,R)} f_*\Omega = c_0 d_1^{-1} \mathrm{Vol}(B(x,R),
    \omega_{can}). \]
  Note that the ball $B(x,R)$ denotes the metric ball of radius $R$ in the metric
  space $\hat{X}$. 
  It follows from this, together with \eqref{eq:Bcontain}, that for $r
  < r_0$ we have
  \[ \label{eq:limVB1} \lim_{j\to\infty} \frac{\mathrm{Vol}(B(q_j, r),
      \omega_{t_j})}{t_j^{m-n}} = c_0d_1^{-1} \mathrm{Vol}(B(x,r),
    \omega_{can}). \]
  Note that the diameters of $(M, \omega_t)$ are uniformly bounded
  from above by a constant $D > 0$  (see Tosatti~\cite{Tos09},  Zhang~\cite{Zhang06}), and at
  the same time 
  \[ \frac{\mathrm{Vol}(M, \omega_t)}{t^{m-n}} \to c_0, \]
  as $t\to 0$. It follows from volume comparison that up to choosing a
  subsequence we have
  \[ \label{eq:volcomp} \frac{\mathrm{Vol}(B(p_{t_j}, 1))}{\mathrm{Vol}(M, \omega_t)} \to
    d_2\]
  for some $d_2 > 0$. Using \eqref{eq:limVB1}, this implies that for
  $r < r_0$ we have
  \[ \nu(B(x,r)) = \lim_{j\to\infty} \frac{\mathrm{Vol}(B(q_j, r),
      \omega_{t_j})}{\mathrm{Vol}(B(p_{t_j}, 1))} = \frac{1}{d_1d_2}
    \mathrm{Vol}(B(x,r), \omega_{can}). \]
  In addition, using \eqref{eq:volcomp} and \eqref{eq:canMA}, we have
  \[ \nu(\hat{X}) = \lim_{j\to\infty} \frac{\mathrm{Vol}(B(q_j, D),
      \omega_{t_j})}{\mathrm{Vol}(B(p_{t_j}, 1))} = \frac{1}{d_2} =
    \frac{1}{d_1d_2} \mathrm{Vol}(M, \omega_{can}^n). \]
  This completes the proof of properties (1) and (2) above, with $v=
  (d_1d_2)^{-1}$. 
\end{proof}

As a consequence we can complete the proof of
Theorem~\ref{thm:GHlimit1}.
\begin{proof}[Proof of Theorem~\ref{thm:GHlimit1}]
As shown by Song-Tian-Zhang~\cite{STZ19}, the Gromov-Hausdorff limit
$(Z, d_Z)$ coincides with the metric completion of $(X^\circ,
\omega_{can})$. We have seen above that $(X, \omega_{can})$ satisfies
the conditions (1), (2), (3) in the introduction. In addition $(Z,
d_Z)$, equipped with the measure $\omega_{can}^n$ is a measured
Gromov-Hausdorff limit of the Ricci flat manifolds $(M_t, \omega_t)$
as $t\to 0$ by Proposition~\ref{prop:nulimit}. It follows by
Cheeger-Colding~\cite{CC3} that the $W^{1,2}$ eigenfunctions of the Laplacian
on $(X^\circ, \omega_{can})$ are Lipschitz, and therefore by
Proposition~\ref{prop:gradestimate} the space $(Z, d_Z,
\omega_{can}^n)$ is an $RCD(0, 2n)$-space. It follows that we can
apply Theorem~\ref{thm:homeo1}, completing the proof. 
\end{proof}

We now consider a closely related setting, namely the continuity
method introduced by La Nave-Tian~\cite{LNT16}. Suppose that $M$ is
projective, of dimension $m$, with semi ample canonical bundle
$K_M$. Let $X$ denote the canonical model of $M$, and $f: M\to X$ the
map induced by a power of $K_M$. Write $n = \dim X$, and assume
that $n\in (0,m]$. Similarly to the above, $f$ is a holomorphic
submersion over $X^\circ = X\setminus B$ for a subvariety $B$.
Let $A$ denote an ample line bundle
on $M$, and $\omega_M\in c_1(A)$ a metric. By \cite{LNT16}, for any
$t\in (0,1]$ we can find a unique $\omega_t\in c_1(K_X) + tc_1(A)$
satisfying the equation
\[ \label{eq:LNT} \mathrm{Ric}(\omega_t) = -\omega_t + t\omega_M. \]
To write this in terms of a Monge-Amp\`ere equation, we can choose a
smooth volume form $\Omega$ on $M$ such that $\mathrm{Ric}(\Omega) =
-f^*\omega_X$ for a smooth K\"ahler metric on $X$. We can then write
\[ \omega_t = f^*\omega_X + t\omega_M + \ddbar \psi_t, \]
where the $\psi_t$ satisfy the equations
\[ \omega_t^m = t^{m-n} e^{\psi_t}\Omega. \]
It was shown in Fu-Guo-Song~\cite{FGS20} that the diameter of $(M,
\omega_t)$ is uniformly bounded, so along a subsequence the $(M,
\omega_t)$ converge in the Gromov-Hausdorff sense to a compact metric
space $(Z,d_Z)$ as $t\to 0$. From Song-Tian-Zhang~\cite[Proposition
2.2]{STZ19} we know that, similarly to the collapsing Calabi-Yau
setting,  $(Z,d_Z)$ is isometric to the metric
completion of $(X^\circ, \omega_{can})$, where $\omega_{can}$ is a
twisted K\"ahler-Einstein metric constructed by Song-Tian~\cite{ST12},
satisfying
\[ \mathrm{Ric}(\omega_{can}) = -\omega_{can} + \omega_{WP}. \]
As before, $\omega_{WP} \geq 0$ on $X^{\circ}$, determined by the
variation of the Calabi-Yau fibers of $f$. Similarly to the above,
$\omega_{can} = \omega_X + \ddbar\psi$, where $\psi$ is a bounded
solution of the Monge-Amp\`ere equation
\[ \omega_{can}^n = d_1 e^\psi f_*\Omega, \]
and $f^*\psi$ is the uniform limit of the $\psi_t$. The arguments from
the collapsing Calabi-Yau setting can easily be modified to obtain the
following.

\begin{thm}
  The metric completion $\hat{X}$ of $(X^\circ, \omega_{can})$ defines
  an $RCD(-1, 2n)$-space, and satisfies Conditions (1), (2), (3)
  in Theorem~\ref{thm:homeo1}. As a consequence $\hat{X}$ is
  homeomorphic to $X$, and satisfies the dimension estimates
  $\dim_{\mathcal{H}}(X \setminus X^{reg}) \leq 2n-4$, and
  $\dim_{\mathcal{H}}(X\setminus X^\circ) \leq 2n -2$. 
\end{thm}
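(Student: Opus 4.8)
The plan is to verify the three assertions of the theorem one at a time, reducing each to the corresponding statement already proved in the Calabi-Yau collapsing case (Theorem~\ref{thm:homeo1} together with Propositions~\ref{prop:gradestimate}, \ref{prop:nulimit}, and the arguments of Section~2), and carefully tracking the sign changes coming from the $-\omega_t$ term in \eqref{eq:LNT}. First I would check Conditions (1),(2),(3). Condition (1) is immediate: $\psi$ is smooth on $X^\circ = X^{reg}\setminus D$ for a divisor $D$ containing $B$, by the submersion property of $f$ over $X^\circ$ and the smoothness of $\omega_{WP}$ there. For Condition (2), from $\omega_{can}^n = d_1 e^\psi f_*\Omega$ and the boundedness of $\psi$ we get $\omega_{can}^n = e^{F}\omega_X^n$ with $F = \psi + \log(d_1 f_*\Omega/\omega_X^n)$; the term $\log(f_*\Omega/\omega_X^n)$ is exactly the quantity analyzed in \cite{ST12} (it is bounded below by \cite[Lemma 2.3]{GS21} after adjusting conventions, and $e^F\in L^p$ for some $p>1$ by \cite[Proposition 3.2]{ST12}), so $F\in L^1$ and $e^F\in L^p$ as required. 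For Condition (3), differentiating the Monge-Amp\`ere equation gives $\mathrm{Ric}(\omega_{can}) = -\omega_{can} + \omega_{WP}$ on $X^\circ$, and since $\omega_{WP}\ge 0$ there, $\mathrm{Ric}(\omega_{can}) \ge -\omega_{can}$ on $X^\circ$; extending across $D\cap X^{reg}$ exactly as in the Calabi-Yau case (locally $\mathrm{Ric}(\omega_{can}) = \ddbar v$ with $v\in L^1_{loc}$ bounded above, so the distributional inequality $\mathrm{Ric}(\omega_{can}) \ge -A(\omega_{can}+\omega_X)$ persists on $X^{reg}$ for suitable $A$), we obtain (3).

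Next I would establish the $RCD(-1,2n)$ property. The structure is parallel to Propositions~\ref{prop:gradestimate} and~\ref{prop:nulimit}: $\hat{X}$ is an almost smooth metric measure space in the sense of \cite[Definition 5]{SzRCD}, and the Bakry-\'Emery lower bound on $X^\circ$ is now $-1$ rather than $0$ because of the $-\omega_{can}$ term in the Ricci identity, giving the $RCD(-1,2n)$ candidate. The only nontrivial input is again that the $W^{1,2}$-eigenfunctions of the Laplacian on $(X^\circ,\omega_{can})$ are Lipschitz; for this I would redo Proposition~\ref{prop:nulimit} to show $(\hat X, d_{\hat X}, \omega_{can}^n)$ is the measured Gromov-Hausdorff limit of $(M,\omega_t,\omega_t^m)$ (appropriately renormalized). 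The computation is the same: from $\omega_t^m = t^{m-n}e^{\psi_t}\Omega$ and the uniform convergence $\psi_t\to f^*\psi$ (from \cite{STZ19,FGS20}), together with $C^0$-convergence of $\omega_t$ to $f^*\omega_{can}$ on compact subsets of $f^{-1}(X^\circ)$, we get $\mathrm{Vol}(f^{-1}(B(x,R)),\omega_t) = t^{m-n}\int_{B(x,R)} e^{\psi_t/\cdots} f_*\Omega \to$ (constant) $\cdot\,\mathrm{Vol}(B(x,R),\omega_{can})$; the diameter bound of \cite{FGS20} and volume comparison then give the analogue of \eqref{eq:volcomp}, and the trivial-extension argument shows $\nu = v\,\omega_{can}^n$. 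Once the measured limit is identified, Cheeger-Colding's gradient estimate \cite{CC3} gives Lipschitz eigenfunctions, hence the $RCD(-1,2n)$ property via the analogue of Proposition~\ref{prop:gradestimate}.

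Finally, the homeomorphism and the Hausdorff dimension bounds follow by invoking Theorem~\ref{thm:homeo1} directly, since we have just verified its hypotheses (Conditions (1),(2),(3) and the $RCD(K,2n)$ property with $K=-1$). The dimension estimates $\dim_{\mathcal H}(X\setminus X^{reg})\le 2n-4$ and $\dim_{\mathcal H}(X\setminus X^\circ)\le 2n-2$ are part of the conclusion of Theorem~\ref{thm:homeo1}; no new argument is needed because the proof of Theorem~\ref{thm:homeo1} in Section~2 only uses the three conditions and the RCD hypothesis, all of which are valid here (the constant $A$ in Condition (3) and the constant $K$ in the RCD property do not affect the proof).

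I expect the main obstacle to be the careful bookkeeping in the measured Gromov-Hausdorff convergence of Proposition~\ref{prop:nulimit}'s analogue --- specifically, confirming that the $\psi_t$ converge uniformly on the relevant sets and that the base point volume ratios $\mathrm{Vol}(B(p_{t_j},1))/\mathrm{Vol}(M,\omega_{t_j})$ converge to a positive constant along a subsequence (requiring the diameter bound of \cite{FGS20} and Bishop-Gromov with the Ricci lower bound $\mathrm{Ric}(\omega_t) \ge -\omega_t + t\omega_M \ge -\omega_t$, which is uniform). Everything else is a routine transcription of Section~2 and Section~3 with the replacement $0 \rightsquigarrow -1$ in the curvature lower bound, as noted in the statement's own phrase ``easily be modified.''
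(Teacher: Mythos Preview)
Your proposal is correct and follows exactly the approach the paper indicates: the paper does not write out a separate proof but simply asserts that ``the arguments from the collapsing Calabi-Yau setting can easily be modified,'' and you have carried out precisely those modifications---verifying Conditions (1),(2),(3) via the same references (\cite{GS21}, \cite{ST12}), redoing Proposition~\ref{prop:nulimit} using $\omega_t^m = t^{m-n}e^{\psi_t}\Omega$ and the uniform convergence $\psi_t\to f^*\psi$, and invoking Proposition~\ref{prop:gradestimate} with the Ricci lower bound shifted from $0$ to $-1$. The bookkeeping point you flag (diameter bound from \cite{FGS20} plus Bishop--Gromov with $\mathrm{Ric}(\omega_t)\ge -\omega_t$) is exactly the substitute for the Tosatti--Zhang diameter bound used in the Calabi-Yau case, so there is no gap.
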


\bibliography{mybib}{}
\bibliographystyle{plain}

\end{document}